\definecolor{cornellred}{rgb}{0.7, 0.11, 0.11}
\newcommand{\B}[1]{\mathbb{#1}}
\newcommand{\C}[1]{\mathcal{#1}}
\newcommand{\vertiii}[1]{{\left\vert\kern-0.25ex\left\vert\kern-0.25ex\left\vert #1 
    \right\vert\kern-0.25ex\right\vert\kern-0.25ex\right\vert}}
\newcommand{\diam}[1]{diam(#1)}
\def\Xint#1{\mathchoice
   {\XXint\displaystyle\textstyle{#1}}%
   {\XXint\textstyle\scriptstyle{#1}}%
   {\XXint\scriptstyle\scriptscriptstyle{#1}}%
   {\XXint\scriptscriptstyle\scriptscriptstyle{#1}}%
   \!\int}
\def\XXint#1#2#3{{\setbox0=\hbox{$#1{#2#3}{\int}$}
     \vcenter{\hbox{$#2#3$}}\kern-.5\wd0}}
\def\dashint{\Xint-}
\theoremstyle{definition}
\newtheorem{definition}{Definition}[section]
\newtheorem{remark}[definition]{Remark}
\theoremstyle{plain}
\newtheorem{theorem}[definition]{Theorem}
\newtheorem{proposition}[definition]{Proposition}
\newtheorem{lemma}[definition]{Lemma}
\newtheorem{corollary}[definition]{Corollary}
\numberwithin{equation}{section}
\begin{document}
\title[Morrey-Campanato Functional Spaces for Carnot Groups]{Morrey-Campanato Functional Spaces\\ for Carnot Groups}

\author[N.\,Cangiotti]{Nicol\`o Cangiotti}
\author[M.\,Capolli]{Marco Capolli}

\address[N.\,Cangiotti]{Department of Mathematics\newline\indent Politecnico di Milano \newline\indent
via Bonardi 9, Campus Leonardo, 20133 Milan, Italy}
\email{nicolo.cangiotti@polimi.it}

\address[M.\,Capolli - Previous affiliation]{Institute of Mathematics\newline\indent Polish Academy of Sciences\newline\indent Jana i Jedrzeja Sniadeckich 8, 00-656 Warsaw, Poland.}
\address[M.\,Capolli - Current affiliation]{Department of Mathematics ``Tullio Levi-Civita'' \newline\indent Università degli studi di Padova \newline\indent
via Trieste 63, 35121 Padua, Italy}
\email{marco.capolli@unipd.it}

\subjclass[2020]{43A15, 43A80, 35R03.}

\keywords{Morrey-Campanato spaces, Carnot group, functional spaces.}

\begin{abstract}
We shortly review the historical path of Morrey-Campanato functional spaces and the fundamentals of Carnot groups. Then, we merge these two topics, by recovering several classical results concerning regularity of Morrey-Campanato spaces in the framework of Carnot groups. 
\end{abstract}

\maketitle


\section{Introduction}
\label{Intro}

In the early 60s, Sergio Campanato published a series of works in the \emph{Annali della Scuola Normale Superiore di Pisa} \cite{Campanato61,Campanato63,Campanato64}. In these papers, he laid the foundations of a particular class of functional spaces that would become a landmark for a wide community of mathematician in the following years. However, despite the intrinsic elegance of Campanato's approach and its undoubted potential, the theory of elliptic and parabolic equations developed in the last decades seems to prefer the classical treatment based on the integral representation (as it was already noticed by Enrico Giusti in the 1978 \cite{Giusti78}). The purpose of this manuscript is then twofold. On the one hand, we want to embrace that part of literature that nowadays carries on the tradition descending from Campanato's original works. On the other hand, we shall provide a further research line by considering the generalization of such functional spaces in the framework of Carnot groups. In order to provide an exhaustive overview on the topic, it seems appropriate to shortly survey the main historical steps of this approach, highlighting also the current state of art of the research with most recent developments. 
\smallskip

In 1938, Charles B. Morrey published a manuscript \cite{Morrey38}, now well renowned, in which he studied the existence and regularity properties of the solutions of the so-called quasi-linear elliptic partial differential equations. In particular, he introduced the idea of approaching the regularity of solutions to PDE by considering the following heuristic inequality:
\[
\int_{B(x,r)} u(y)dy\le k\cdot r^{\lambda}, \quad \text{with } \lambda \ge 0,
\]
where $B(x,r)$ denotes the usual Euclidean ball of radius of radius $r$ centered in $x$ and $k$ is a suitable constant. In literature the spaces satisfying this kind of property are called $\mathcal{L}^{p,\lambda}$ spaces. In the following paragraph, we shall provide a formal description of a generalization of these function spaces. Similar techniques were developed in the following years by Louis Nierenberg \cite{Nirenberg53}, Robert Finn \cite{Finn53}, and Norman Meyer \cite{Meyer64}. However, the real turning point came in Italy during the 60s, with Enrico Giusti \cite{Giusti67}, Carlo Miranda \cite{Miranda63}, Guido Stampacchia \cite{Stampacchia60}, Giorgio Talenti \cite{Talenti65}, and especially with the already cited Sergio Campanato \cite{Campanato61, Campanato63, Campanato64}. Indeed, Campanato was the first\footnote{For this reason, the name Morrey-Campanato is often used when referring to those spaces.} to thoroughly investigate the space $\C{L}^{p,\lambda}$.

To be more specific, let us consider $\Omega$ a bounded and measurable subset of $\mathbb{R}^n$ with diameter $\rho_0$. Then, it is possible to fix two real numbers $p$ and $\lambda$, with $p\ge 1$ and $0 \le \lambda \le n$ and then define the functional space $\mathcal{L}^{p,\lambda}(\Omega)$ of the functions $u(x)$ defined in $\Omega$ such that 
\begin{equation}
\sup_{\substack{y\in \Omega\\\rho \in [0,\rho_0]}}\frac{1}{\rho^{\lambda}}\int_{B(y,\rho)\cap\Omega}|u(x)|^p<+\infty.
\end{equation}
By assuming as norm the following expression, we eventually get a complete Banach space:
\[
\vert\vert u \vert \vert_{\mathcal{L}^{p,\lambda}}(\Omega)=\left[ \sup_{\substack{y\in \Omega\\\rho \in [0,\rho_0]}}\frac{1}{\rho^{\lambda}}\int_{B(y,\rho)\cap\Omega}|u(x)|^p<+\infty\right]^{\frac{1}{p}}.
\]

Since their introduction, Morrey-Campanato spaces have represented a stimulating topic, whose interest is cyclically renewed decade by decade as testified by the different surveys regularly published through the years. We suggest, for the keen reader, the first survey due to Jaak Peetre \cite{Peetre69} dated 1969, the first update of 1979 written by Mitchell Taibleson and
Guido Weiss \cite{Taibleson80}, and the most recent review of Humberto Rafeiro, Natasha Samko and Stefan Samko \cite{Rafeiro2013}. As further evidence of the increasing appeal aroused by these kind of spaces, we remark that Morrey-Campanato spaces are at the heart of several recent studies. By way of illustration, we mention the work of David Adams and Jie Xiao, which analyzes several fundamental aspects of functional analysis and potential theory for the Morrey-Campanato
spaces in harmonic analysis \cite{Adams12} and the work of Dachun Yang, Dongyong Yang, and Yuan Zhou that applies localized Morrey-Camapanto space on Schr\"odinger operators \cite{yang10}. Other important contributions in this area are provided by the two volumes about the application of 
Morrey spaces on integral operators and PDE's, written by Yoshihiro Sawano, Giuseppe Di Fazio, Denny Ivanal Hakim and published in 2020 \cite{sawano2020}. Another thrilling book date-stamped 2010 is due to Wen Yuan, Winfried Sickel, and Dachun Yang \cite{Yuan10}, where the authors give a unified treatment of three kind of functional spaces, namely Besov and Triebel-Lizorkin spaces, Morrey-Campanato spaces and  the so-called $Q$ spaces (spaces of holomorphic functions on the unit disk). 

We finally recall a compelling application of Morrey spaces in the study of Navier-Stokes equations developed by Tosio Kato in 1992 \cite{Kato92}. This manuscript was forerunner of a series of works published in the following years as the book due to Hans Triebel \cite{Triebel13} and the article due to Pierre Gilles Lemarié-Rieusset \cite{Lemarie07}. We finally remark that Lemarié-Rieusset also studied the problem of pointwise multipliers between Morrey spaces, providing necessary and sufficient conditions for granting that the space of those multipliers is a Morrey space as well \cite{Lemarie13}.
\medskip

As mentioned above, Campanato provided a compelling generalization, in which are included (by varying some parameters) not only Morrey spaces, but also H\"older spaces \cite{Campanato64}. A result in \cite{Campanato64} that is key through the paper is a lemma which is attributed by Campanato to Ennio de Giorgi. Such lemma gives an estimate on the norm of the derivative of a polynomial based on the integral of the whole polynomial (see \ref{degiorgieuclideo} for the details). In a recent paper by Marco Capolli, Andrea Pinamonti, and Gareth Speight \cite{Capolli23}, an analogous of de Giorgi's lemma was proved, for the case $p=1$, in the setting of Carnot groups. Inspired by this, we decided to merge Campanato's work with the theory of Carnot groups. Thus, our study can be included in the recent research line, which is focused on this kind of combination of mathematical tools. For instance, in 2010, Vagif Guliyev and Ali Akbulut Yagub Mammadov proved the boundedness of the fractional maximal operator and their commutators on the Carnot group in generalized Morrey spaces \cite{Guliyev13}. A few year later, continuing  along the same lines, Ahmet Eroglu, Vagif Guliyev, and Javanshir Azizov proved two Sobolev-Stein embedding theorems on generalized Morrey spaces in the Carnot group setting \cite{Eroglu17}. Moreover, in 2013, Kabe Moen and Virginia Naibo introduced Leibniz type rules in Morrey-Campanato spaces by introducing the notions of higher-order weighted multilinear Poincaré and Sobolev
inequalities in Carnot groups \cite{Moen14}.
\medskip

The manuscript is structured as follows. Section \ref{Sec2} shortly reviews the basis of Carnot groups theory, which represent the main framework in this paper. In Section \ref{Sec3}, we shall introduce the class of function proposed by Campanato, by replacing the usual Euclidean space with a generic Carnot group. Section \ref{Sec4} is devoted to present our main results about the regularity depending on the parameters introduced in the previous sections. Finally, in Section \ref{Sec5} we take stock of our work, suggesting possible directions for future research lines. 
\bigskip

\section{Preliminaries on Carnot groups}
\label{Sec2}

For the sake of completeness, we recall in this section some preliminaries on Carnot groups, underlying the basic concepts and definitions as well as some fundamental results.

A \emph{Lie group} $\B{G}$ is a smooth manifold which is also a group and for which the group operations are smooth maps. The \emph{Lie algebra} $\mathfrak{g}$ associated to a Lie group is the space of left invariant vector fields equipped with the Lie bracket $[\cdot, \cdot]\colon \mathfrak{g}\times\mathfrak{g}\to \mathfrak{g}$ where
\[[X,Y](f)=X(Y(f))-Y(X(f)),\quad \mbox{for }f\colon \B{G}\to \mathbb{R}\ \mbox{smooth}.\]

\begin{definition}\label{Carnotdef}
A simply connected Lie group $\B{G}$ is said to be a \emph{Carnot group of step $s$} if there exist linear subspaces $V_1, \dots ,V_s$ of $\mathfrak{g}$ such that
\[\mathfrak{g}=V_1\oplus \dots \oplus V_s\]
with
\[[V_1,V_{i}]:=\mathrm{span}\{[a,b]: a\in V_1,\ b\in V_i\}=V_{i+1} \mbox{ if }1\leq i\leq s-1\]
and
$[V_1,V_s]=\{0\}$.

$V_1\oplus \dots \oplus V_s$ is called \emph{stratification} of $\mathfrak{g}$.

Let $m_i:=\dim(V_i)$ and define $h_i:=m_1+\dots +m_i$ for $1\leq i\leq s$. We also call $N:=h_s=\dim{\mathfrak{g}}$. A basis $X_1,\dots, X_N$ of $\mathfrak{g}$ is said to be \emph{adapted to the stratification} if $X_{h_{i-1}+1},\dots, X_{h_{i}}$ is a basis of $V_i$ for $1\leq i \leq s$. 
\end{definition}

For the remainder of this paper, $\B{G}$ will be a Carnot group of step $s$ with Lie algebra $\mathfrak{g}$ admitting a stratification as in Definition \ref{Carnotdef}. We will also consider to have fixed a basis adapted to the stratification of $\B{G}$.

The map $\exp \colon \mathfrak{g}\to \mathbb{G}$ is defined by $\exp(X)=\gamma(1)$, where $\gamma \colon [0,1] \to \mathbb{G}$ is the unique solution to the problem
\[
\begin{cases} \gamma'(t)=X(\gamma(t))\\ \gamma(0)=e\end{cases}
\]
where $e$ is the identity element of $\mathbb{G}$.  This \emph{exponential map} is a diffeomorphism between $\mathbb{G}$ and $\mathfrak{g}$. We will identify $\mathbb{G}$ with $\mathbb{R}^{N}$ by:
\[ \exp(x_{1}X_{1}+\dots +x_{N}X_{N})\in \mathbb{G} \longleftrightarrow (x_{1}, \dots, x_{N})\in \mathbb{R}^{N}.\]
With this identification, we denote points of $\mathbb{G}$ by $(x_{1}, \dots, x_{N})\in \mathbb{R}^{N}$.
\begin{definition}\label{def_homog}
    The \emph{homogeneity} $d_i\in\B{N}$ of the coordinate $x_i$ is defined by
    \[ d_i:=j \quad\text {whenever}\quad h_{j-1}+1\leq i\leq h_{j}.\]
\end{definition} 
Thanks to the definition of homogeneity we can define dilations: for any $\lambda >0$, the \emph{dilation} $\delta_\lambda\colon \B{G}\to\B{G}$, is defined in coordinates by
    \[\delta_\lambda(x_1, \dots ,x_N)=(\lambda^{d_1}x_1, \dots ,\lambda^{d_N}x_N).\]
Dilations satisfy $\delta_{\lambda}(xy)=\delta_{\lambda}(x)\delta_{\lambda}(y)$ and $\left(\delta_{\lambda}(x)\right)^{-1}=\delta_{\lambda}(x^{-1})$ where $x^{-1}$ denotes the inverse element of $x$.

A \emph{Haar measure} $\mu$ on $\mathbb{G}$ is a non-trivial Borel measure on $\mathbb{G}$ so that $\mu(gE)~=~\mu(E)$ for any $g\in \mathbb{G}$ and Borel set $E\subset \mathbb{G}$. Any constant multiple of the $N$ dimensional Lebesgue measure $\mathcal{L}^{N}$ is a Haar measure for $\B{G}$. 

\begin{definition}\label{horizontalcurve}
An absolutely continuous curve $\gamma\colon [a,b]\to \mathbb{G}$ is \emph{horizontal} if there exist $u_{1}, \dots, u_{N}\in L^{1}[a,b]$ such that $\gamma'(t)=\sum_{j=1}^{N}u_{j}(t)X_{j}(\gamma(t))$ for almost every $t\in [a,b]$.

We also define the \emph{(horizontal) length} of $\gamma$ as $$
L(\gamma):=\int_{a}^{b}|u(t)|dt,
$$ 
where $u=(u_{1}, \dots, u_{N})$ and $|\cdot|$ denotes the Euclidean norm on $\mathbb{R}^{N}$.
\end{definition}

Thanks to the Chow-Rashevskii Theorem \cite[Theorem 9.1.3]{BLU07} we know that any two points in $\mathbb{G}$ can be connected by horizontal curves. This allows us to define the \emph{Carnot-Carath\'eodory distance (CC distance)} as
\[
d(x,y):=\inf \{ L(\gamma) : \gamma \colon [0,1]\to \mathbb{G} \mbox{ horizontal joining }x\mbox{ to }y \}.
\]
As customary, we will denote the CC ball with center $x$ and radius $r$ by $B(x,r)$. It can be proved that, by defining $Q:=\sum_{i=1}^s i m_i$ (recall $s$ and $m_i$ from Definition \ref{Carnotdef}), the following holds
\[
\mathcal{L}^N(B(x,r))=r^Q \mathcal{L}^N(B(0,1)).
\]
Notice that $Q\geq N$ and the equality holds only when $\B{G}$ is exactly $\B{R}^N$.

The CC distance induces on $\mathbb{G}$ the same topology as the Euclidean distance, so terms like open, closed, and compact sets can be used without ambiguity.

The CC distance is not bi-Lipschitz equivalent to the Euclidean distance, however it is possible to prove the following lemma (see \cite[Proposition 5.15.1]{BLU07})

\begin{lemma}\label{ballequivalence}
    Given $K\subset \mathbb{G}$ compact, there exists a constant $c>0$, depending only on $K$, such that
    \begin{equation}
        \frac{1}{c}|x-y|\leq d(x,y)\leq c|x-y|^{\frac{1}{s}}\qquad \mbox{for all } x,y\in K
    \end{equation}
where $|\cdot|$ denotes the Euclidean norm and points $x$ and $y$ are intended either as points in $\B{G}$ or in $\B{R}^N$, based on the context.
\end{lemma}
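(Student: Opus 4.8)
The plan is to prove the two inequalities separately and, in each case, to first establish the estimate when $x,y\in K$ are Euclidean-close, then upgrade it to all of $K\times K$ by a compactness argument exploiting the stated coincidence of the Carnot--Carath\'eodory and Euclidean topologies.

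For the left-hand inequality I would fix a compact neighbourhood $\widetilde K$ of $K$ and set $M:=\max\{\,|X_j(z)| : z\in\widetilde K,\ 1\le j\le N\,\}$, which is finite because, in exponential coordinates, the $X_j$ are smooth vector fields on $\B R^N$. If $\gamma$ is a horizontal curve joining $x$ to $y$ with control $u$ (Definition \ref{horizontalcurve}), then $|\gamma'(t)|\le \sqrt N\,M\,|u(t)|$ at almost every $t$ for which $\gamma(t)\in\widetilde K$; hence either $\gamma$ stays inside $\widetilde K$, so its Euclidean length is at most $\sqrt N\,M\,L(\gamma)$ and therefore $|x-y|\le \sqrt N\,M\,L(\gamma)$, or $\gamma$ leaves $\widetilde K$ and $L(\gamma)\ge (\sqrt N\,M)^{-1}\,\mathrm{dist}(K,\B G\setminus \widetilde K)$. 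Since $|x-y|\le \mathrm{diam}(K)$, in both cases $L(\gamma)\ge c_1|x-y|$ with $c_1>0$ depending only on $K$, and taking the infimum over all such $\gamma$ gives $d(x,y)\ge c_1|x-y|$.

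For the right-hand inequality I would use left-invariance of $d$ to reduce to a statement at the origin, $d(x,y)=d(e,x^{-1}y)$, and study the homogeneous norm $N(p):=d(e,p)$. The two inputs are that $N$ is continuous (immediate from the equality of the two topologies, since $N$ is $1$-Lipschitz for $d$) and that $N(\delta_\lambda p)=\lambda N(p)$ (because $\delta_\lambda$ sends horizontal curves to horizontal curves and scales their length by $\lambda$). Comparing $N$ with the ``box norm'' $\|p\|_\infty:=\max_i |p_i|^{1/d_i}$ on the Euclidean-compact set $\{\|p\|_\infty=1\}$, where $N$ is continuous and bounded away from $0$ and $\infty$, and then rescaling by a dilation, yields $N(p)\le C\,\|p\|_\infty$ for every $p$. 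Since every homogeneity $d_i$ is at most $s$, for $|p|\le 1$ one has $|p_i|^{1/d_i}\le |p_i|^{1/s}\le |p|^{1/s}$, so $N(p)\le C\,|p|^{1/s}$. Finally, smoothness of the group law provides a bound $|x^{-1}y|\le L_K|x-y|$ for $x,y$ near $K$, whence $d(x,y)=N(x^{-1}y)\le C L_K^{1/s}\,|x-y|^{1/s}$ as soon as $|x-y|$ is small enough.

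It remains to remove the smallness restriction. Both estimates hold whenever $|x-y|\le \varepsilon$ for some fixed $\varepsilon=\varepsilon(K)>0$; for $x,y\in K$ with $|x-y|>\varepsilon$ we simply note that $D:=\sup_{K\times K} d<\infty$ by continuity of $d$ and compactness of $K$, so $d(x,y)\le D\le (D/\varepsilon)\,|x-y|$ and likewise $d(x,y)\le D\le (D/\varepsilon^{1/s})\,|x-y|^{1/s}$; taking $c$ large enough to absorb all the constants above completes the argument. The only genuinely substantive point is the comparison $N(p)\le C\|p\|_\infty$: although here it reduces to a short scaling--compactness argument once the topological equivalence is granted, it is exactly this estimate that encodes the ball--box behaviour of the CC metric and produces the exponent $1/s$; carrying it out by hand, without invoking the topological equivalence, would require adjusting coordinates layer by layer via iterated brackets of the generators, which is where the real work would lie.
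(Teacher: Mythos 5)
Your proposal is essentially correct, but note that the paper does not prove this lemma at all: it simply quotes \cite[Proposition 5.15.1]{BLU07}, so what you supply is a self-contained argument where the paper has only a citation. Your lower bound (bounding the Euclidean speed of a horizontal curve by its control on a compact neighbourhood, and treating curves that escape that neighbourhood separately, using $|x-y|\le\mathrm{diam}(K)$) is sound and already covers \emph{all} pairs $x,y\in K$; consequently the large-separation patch for that inequality in your last paragraph is superfluous, and as written ($d(x,y)\le (D/\varepsilon)|x-y|$) it points in the wrong direction anyway --- only the upper bound needs the $|x-y|>\varepsilon$ patch, which you do handle correctly. Your upper bound is the standard homogeneity-plus-compactness comparison of the homogeneous norm $N(p)=d(e,p)$ with the box norm, and it works provided two inputs are granted: that $d$ is a finite metric (Chow--Rashevskii, which the paper invokes) and that the CC topology coincides with the Euclidean one, which you need both for Euclidean continuity of $N$ on the unit box sphere and for finiteness of $\sup_{K\times K}d$. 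The caveat --- which you flag honestly --- is that this topological equivalence is essentially the qualitative content of the estimate being proven, and in \cite{BLU07} it is itself derived from ball-box-type estimates; so your route is non-circular only if one accepts the equivalence as an independent prior fact, as the paper indeed asserts it just before the lemma. One further small point: the exact homogeneity $N(\delta_\lambda p)=\lambda N(p)$ requires that dilations scale horizontal length exactly by $\lambda$, which holds for the standard definition in which the controls act only on the first-layer fields; the paper's definition of horizontal curve literally sums over all $N$ vector fields (a slip in the paper), under which exact homogeneity would fail, so your implicit use of the standard convention is the correct reading.
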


\subsection{Polynomials and Smooth Functions}
In this paragraph, we present a brief introduction to Polynomials and smooth functions on Carnot groups. We follow the notation introduced in \cite{FS}. 

A \emph{multi-index} $J=(j_1,\dots, j_N)$ is an ordered list of $N$ non-negative integers. Given a multi-index, we define its \emph{norm} as $|J| = \sum_{i=1}^N j_i$ and its \emph{homogeneous norm} as $|J|_{\B{G}} = \sum_{i=1}^N d_i j_i$.  We also set $J!:=j_{1}!j_{2}!\dots j_{N}!$. The two norms are related by the inequality:
\begin{equation}\label{normindexequivalence}
    |J|\leq |J|_{\B{G}}\leq d_N |J|
\end{equation}

Whether $|\cdot|$ is used for the Euclidean norm or for the norm of a multi-index will be clear from the context.

We define homogeneous polynomials as follows, see \cite{PV06, BLU07}.

\begin{definition}
Given coordinates $x=(x_1,\dots,x_N)$ in $\mathbb{R}^N$, we define:
\begin{itemize}
\item A \emph{monomial of homogeneous degree $d\geq 0$} is a polynomial of the form $x^J:=x_1^{j_1}\dots x_N^{j_N}$ for some multi-index $J=(j_1,\dots, j_N)$ with $|J|_{\mathbb{G}}=d$.
\item A \emph{homogeneous polynomial of homogeneous degree $d$} is a linear combination of monomials of the same homogeneous degree $d$. 
\item A \emph{polynomial of homogeneous degree at most $d$} is a linear combination of monomials of homogeneous degree at most $d$.
\end{itemize}
\end{definition}

Given a multi-index $I$, we denote higher order derivatives by $X^I=X_1^{i_1}\dots X_N^{i_N}$ and $\left( \frac{\partial}{\partial x} \right)^{I}=\frac{\partial^{i_{1}}}{\partial x_{1}^{i_{1}}}\dots \frac{\partial^{i_{N}}}{\partial x_{1}^{i_{N}}}$. The following lemma is \cite[Proposition 20.1.5]{BLU07}.

\begin{lemma}\label{multiderivative}
For every multi-index $I$ we have
\[
X^I=\left( \frac{\partial}{\partial x} \right)^I + \sum_{\substack{J \neq I \\ |J| \leq |I|\\ |J|_{\B{G}}\geq |I|_{\B{G}}}} Q_{J,I} \left(\frac{\partial}{\partial x}\right)^J
\]
where the $Q_{I,J}$ are homogeneous polynomials of homogeneous degree $$
|J|_{\B{G}} - |I|_{\B{G}}.
$$
This equation is meant in the sense that both left and right define differential operators on $\B{R}^{N}$ whose action agrees on $C^{\infty}$ functions on $\B{R}^{N}$.
\end{lemma}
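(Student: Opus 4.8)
The plan is to reduce the statement to the known structural form of the basis vector fields in exponential coordinates and then to induct on the order $|I|$, peeling off the leftmost factor of $X^{I}$ and expanding by the Leibniz rule; all the identities below are identities of differential operators acting on $C^{\infty}(\B{R}^{N})$. The input I would start from is that, since $X_{1},\dots,X_{N}$ is adapted to the stratification and we are working in exponential coordinates of the first kind, each basis field has the form
\[
X_{i}=\frac{\partial}{\partial x_{i}}+\sum_{d_{j}>d_{i}}q_{ij}(x)\,\frac{\partial}{\partial x_{j}},
\]
with $q_{ij}$ a homogeneous polynomial of homogeneous degree $d_{j}-d_{i}$ (see \cite{BLU07} or \cite{FS}; this rests on left-invariance together with the polynomiality of the group law in these coordinates and its compatibility with the dilations $\delta_{\lambda}$ recalled in Section \ref{Sec2}). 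In particular each $X_{i}$ is a first-order operator containing $\partial/\partial x_{i}$ with coefficient $1$, which settles the base case $|I|=1$: there the admissible $J$ are the unit multi-indices $e_{j}$ with $j\neq i$ and $d_{j}\geq d_{i}$, carrying the coefficient $q_{ij}$ --- the zero polynomial when $d_{j}=d_{i}$ --- which is homogeneous of degree $d_{j}-d_{i}=|J|_{\B{G}}-|I|_{\B{G}}$.

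For the inductive step, assume the formula for every multi-index of order at most $|I|-1$, let $k$ be the least index with $i_{k}>0$, and write $X^{I}=X_{k}\circ X^{I'}$ with $I'=I-e_{k}$, so $|I'|=|I|-1$. Substituting the inductive expansion $X^{I'}=\sum_{L}c_{L}(x)(\partial/\partial x)^{L}$ --- where $c_{I'}=1$, $c_{L}=Q_{L,I'}$ for $L\neq I'$, and $L$ ranges over multi-indices with $|L|\leq|I'|$ and $|L|_{\B{G}}\geq|I'|_{\B{G}}$ --- together with the structural form of $X_{k}$, and using the Leibniz rule $\partial_{k}(fg)=(\partial_{k}f)g+f\,\partial_{k}g$, one obtains the sum of four families of terms
\[
c_{L}(\partial/\partial x)^{L+e_{k}},\qquad (\partial_{k}c_{L})(\partial/\partial x)^{L},\qquad q_{km}\,c_{L}\,(\partial/\partial x)^{L+e_{m}},\qquad q_{km}\,(\partial_{m}c_{L})(\partial/\partial x)^{L}
\]
the last two also summed over $m$ with $d_{m}>d_{k}$. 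Grouping these by their derivative multi-index $J$ and defining $Q_{J,I}$ to be the resulting total coefficient of $(\partial/\partial x)^{J}$ puts $X^{I}$ in the claimed shape; it then remains to verify the constraints on $J$ and that $(\partial/\partial x)^{I}$ occurs with coefficient $1$.

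Since $X^{I}$ is a composition of $|I|$ first-order operators it has order $\leq|I|$, so only $J$ with $|J|\leq|I|$ occur. For the homogeneous degree, using $|I'|_{\B{G}}=|I|_{\B{G}}-d_{k}$ a direct check will show that in each of the four families the coefficient is homogeneous of degree $|J|_{\B{G}}-|I|_{\B{G}}$, $J$ being that term's derivative multi-index (alternatively, this is forced by $X^{I}$ being $\delta_{\lambda}$-homogeneous of degree $|I|_{\B{G}}$). Since a polynomial that is $\delta_{\lambda}$-homogeneous of negative degree is identically zero, the coefficient of $(\partial/\partial x)^{J}$ must vanish whenever $|J|_{\B{G}}<|I|_{\B{G}}$; discarding those terms leaves exactly the index set in the statement, with each surviving $Q_{J,I}$ a finite sum of homogeneous polynomials of the common degree $|J|_{\B{G}}-|I|_{\B{G}}$, hence homogeneous of that degree.

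The only delicate point --- which I see as the main obstacle, though it is organizational bookkeeping rather than anything conceptual --- is that $(\partial/\partial x)^{I}$ carries coefficient exactly $1$. It does arise once, from the first family with $L=I'$, where $c_{I'}(\partial/\partial x)^{I'+e_{k}}=(\partial/\partial x)^{I}$. No other term reaches it: in the second and fourth families a contribution with derivative multi-index $I$ would require $L=I$, impossible since $|L|\leq|I'|<|I|$; in the third family $L+e_{m}=I=I'+e_{k}$ forces $L=I'+e_{k}-e_{m}$, so $|L|_{\B{G}}=|I'|_{\B{G}}+d_{k}-d_{m}<|I'|_{\B{G}}$ because $d_{m}>d_{k}$, whence $c_{L}=0$ by the inductive hypothesis; and in the first family $L+e_{k}=I$ forces $L=I'$, already accounted for. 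Hence the coefficient of $(\partial/\partial x)^{I}$ is $1$, every remaining coefficient has the asserted form, and the induction closes.
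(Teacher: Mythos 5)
Your argument is correct, but note that the paper does not prove this lemma at all: it is quoted verbatim as \cite[Proposition 20.1.5]{BLU07}, so there is no internal proof to compare against. Your induction is the standard way to derive the statement, and the bookkeeping is sound: the base case is exactly the structural form $X_i=\partial_{x_i}+\sum_{d_j>d_i}q_{ij}\,\partial_{x_j}$ with $q_{ij}$ $\delta_\lambda$-homogeneous of degree $d_j-d_i$; peeling off the leftmost factor $X_k$ (legitimate, since $k$ is the least index with $i_k>0$, so $X^I=X_k\circ X^{I'}$ in the paper's ordering convention $X^I=X_1^{i_1}\cdots X_N^{i_N}$) and applying Leibniz gives precisely your four families; the degree count $|J|_{\B{G}}-|I|_{\B{G}}$ checks out in each family using $|I|_{\B{G}}=|I'|_{\B{G}}+d_k$; the elimination of $|J|_{\B{G}}<|I|_{\B{G}}$ via the impossibility of nonzero homogeneous polynomials of negative degree is fine; and your treatment of the leading coefficient is the right delicate point and is handled correctly (in the third family $L=I'+e_k-e_m$ either fails to be a multi-index appearing in the inductive expansion or has $|L|_{\B{G}}<|I'|_{\B{G}}$, hence contributes nothing). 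The only caveat is that the real group-theoretic content is concentrated in the structural form of the $X_i$ in exponential coordinates, which you take as input with a citation; that is a legitimate and standard starting point (it is where left-invariance, the polynomial group law and compatibility with the dilations enter), but a fully self-contained proof would have to establish it, which is essentially what \cite{BLU07} does before proving the cited proposition. In short: the paper buys the result by citation, you buy it by a clean induction from the first-order structure of the adapted frame; both are valid, and your route has the merit of making the homogeneity mechanism behind the constraint $|J|_{\B{G}}\geq|I|_{\B{G}}$ explicit, which is the feature actually used later in Lemma \ref{lemmaderivative} and in the De Giorgi-type estimate.
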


The following lemma is a direct consequence of the results presented so far.

\begin{lemma}\label{lemmaderivative}
Suppose $P$ is a homogeneous polynomial of homogeneous degree $k$ and $I$ is a multi-index. Then $X^I P$ is either identically zero, or it is a homogeneous polynomial of homogeneous degree
$$
k-|I|_{\B{G}}.
$$
In the particular case $|I|_{\B{G}}=k$, $X^I P$ is a constant. 
\end{lemma}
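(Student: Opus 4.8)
The plan is to deduce the statement from Lemma \ref{multiderivative}, which expresses the left-invariant operator $X^I$ as a sum of Euclidean differential operators with homogeneous polynomial coefficients, together with the elementary observation that a Euclidean partial derivative lowers the homogeneous degree of a homogeneous polynomial in a controlled way.

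First I would record the basic fact: if $Q$ is a homogeneous polynomial of homogeneous degree $k$ and $J$ is a multi-index, then $\left(\frac{\partial}{\partial x}\right)^J Q$ is either identically zero or a homogeneous polynomial of homogeneous degree $k - |J|_{\B{G}}$. It suffices to check this on a monomial $x^L$ with $|L|_{\B{G}} = k$: if $L_i \geq j_i$ for every $i$, then $\left(\frac{\partial}{\partial x}\right)^J x^L$ is a nonzero constant multiple of $x^{L-J}$, and $|L-J|_{\B{G}} = |L|_{\B{G}} - |J|_{\B{G}} = k - |J|_{\B{G}}$; otherwise the derivative vanishes. Summing over the monomials of $Q$, and using that a sum of homogeneous polynomials of the same homogeneous degree is again homogeneous of that degree (or zero), gives the claim. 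In particular, if $|J|_{\B{G}} > k$ then $\left(\frac{\partial}{\partial x}\right)^J Q \equiv 0$, since there is no nonzero homogeneous polynomial of negative homogeneous degree.

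Now apply Lemma \ref{multiderivative} to $P$, writing
\[
X^I P = \left(\frac{\partial}{\partial x}\right)^I P + \sum_{\substack{J \neq I \\ |J|\leq|I| \\ |J|_{\B{G}}\geq|I|_{\B{G}}}} Q_{J,I}\,\left(\frac{\partial}{\partial x}\right)^J P.
\]
By the fact above, $\left(\frac{\partial}{\partial x}\right)^I P$ is zero or homogeneous of homogeneous degree $k - |I|_{\B{G}}$. For each remaining term, $\left(\frac{\partial}{\partial x}\right)^J P$ is zero or homogeneous of homogeneous degree $k - |J|_{\B{G}}$, while $Q_{J,I}$ is homogeneous of homogeneous degree $|J|_{\B{G}} - |I|_{\B{G}}$; since the product of homogeneous polynomials is homogeneous of degree equal to the sum of the degrees, each such term is zero or homogeneous of homogeneous degree $(k - |J|_{\B{G}}) + (|J|_{\B{G}} - |I|_{\B{G}}) = k - |I|_{\B{G}}$. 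Hence $X^I P$ is a sum of homogeneous polynomials all of homogeneous degree $k - |I|_{\B{G}}$ (discarding the vanishing terms), and is therefore itself either identically zero or homogeneous of homogeneous degree $k - |I|_{\B{G}}$. When $|I|_{\B{G}} = k$ this degree is $0$, so $X^I P$ is a constant (possibly zero); when $|I|_{\B{G}} > k$ every summand vanishes and $X^I P \equiv 0$, consistently.

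There is no serious obstacle here: the only points requiring care are the bookkeeping of homogeneous degrees — using that $\left(\frac{\partial}{\partial x}\right)^J$ drops the homogeneous degree by exactly $|J|_{\B{G}}$, not by $|J|$ — and the treatment of the degenerate cases (the zero polynomial and the situation $|I|_{\B{G}} > k$), both of which fall under ``identically zero''. One should also keep in mind the convention in Lemma \ref{multiderivative} that the $J = I$ term carries coefficient $1$, which is a homogeneous polynomial of homogeneous degree $0$, so that it fits the same pattern as the other summands.
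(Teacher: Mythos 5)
Your argument is correct and is exactly the route the paper intends: it states the lemma as a direct consequence of Lemma \ref{multiderivative} without writing out details, and your fleshing-out (Euclidean derivatives drop the homogeneous degree by $|J|_{\B{G}}$, the coefficients $Q_{J,I}$ have homogeneous degree $|J|_{\B{G}}-|I|_{\B{G}}$, so every summand has homogeneous degree $k-|I|_{\B{G}}$) is the natural completion, including the correct handling of the degenerate cases.
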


\begin{definition}
Given an open set $\Omega\subset \mathbb{G}$ and $k\in\mathbb{N}$, we define
\[
C^k_{\mathbb{G}}(\overline{\Omega}):=\left\{u: \Omega\to \mathbb{R} \colon X^I u \mbox{ exists and is continuous for all }  |I|_{\B{G}}\leq k\right\}.
\]
Similarly we define, for $0<\alpha\leq 1$, $C^{k,\alpha}_{\B{G}}(\overline{\Omega})$ as the subspace of $C^k_{\B{G}}(\overline{\Omega})$ of functions whose derivative not only exist continuous, but are also H\"olderian in $\overline{\Omega}$ with exponent $\alpha$.
\end{definition}
To avoid a heavy notation, we will omit the $\B{G}$ in $C^{k}_{\B{G}}(\overline{\Omega})$ and $C^{k,\alpha}_{\B{G}}(\overline{\Omega})$.
\medskip

It is known that $C^k(\overline{\Omega})$ and $C^{k,\alpha}(\overline{\Omega})$ are (complete) Banach spaces with respect to the norms
\[
[u]_{k,\Omega} = \sum_{|I|_{\B{G}}\leq k}\sup_{\overline{\Omega}}|X^I u|
\]
and
\begin{equation}\label{normCkalpha}
    [u]_{k,\alpha,\Omega} = [u]_{k,\Omega} + \sup_{|I|_{\B{G}} = k} \left\{\sup_{\stackrel{x,y\in\Omega}{x\neq y}}\frac{|X^I u(x) - X^I u(y)|}{d(x,y)^{\alpha}}\right\}
\end{equation}

The following lemma is attributed to De Giorgi by Campanato in \cite{Campanato64}. 

\begin{lemma}[De Giorgi]\label{degiorgieuclideo}
Let $E\subset B(x_0,r)\subset\B{R}^n$ be measurable and such that $\mathcal{L}^n(E)\geq Ar^n$ for some constant $A>0$. Then, for each positive integer $k$ and for each $p\geq 1$ there exists a positive constant $C$, depending only on $n,k,p$ and $A$, such that
\[
\left|\left[\left(\frac{\partial}{\partial x}\right)^I P(x)\right]_{x=x_0}\right|^p\leq \frac{C}{r^{n+p|I|}}\int_E |P(x)|^p d x
\]
for all polynomials $P$ of degree at most $k$ and for all multi-indices $I$.
\end{lemma}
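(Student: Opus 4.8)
The plan is to reduce the estimate to a normalized configuration on the unit ball and then run a compactness argument on the finite‑dimensional space $\mathcal{P}_k$ of polynomials on $\mathbb{R}^n$ of degree at most $k$ (so $\dim\mathcal{P}_k=\binom{n+k}{k}<\infty$), exploiting that on such a space all norms are equivalent.

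First I would reduce to the case $x_0=0$, $r=1$. The translation $x\mapsto x+x_0$ replaces $P$ by $P(\cdot+x_0)$ and $E$ by $E-x_0\subset B(0,r)$, so we may take $x_0=0$ without changing either side. Then, setting $\tilde P(y):=P(ry)$ and $\tilde E:=r^{-1}E\subset B(0,1)$, we have $\mathcal{L}^n(\tilde E)\geq A$, the chain rule gives $(\partial/\partial y)^I\tilde P(0)=r^{|I|}\,(\partial/\partial x)^IP(0)$, and the change of variables $x=ry$ gives $\int_{\tilde E}|\tilde P(y)|^p\,dy=r^{-n}\int_E|P(x)|^p\,dx$. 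Hence the claimed inequality is equivalent to the existence of a constant $C=C(n,k,p,A)$ such that $|(\partial/\partial x)^IP(0)|^p\leq C\int_E|P|^p\,dx$ for every measurable $E\subset B(0,1)$ with $\mathcal{L}^n(E)\geq A$, every $P\in\mathcal{P}_k$ and every multi-index $I$. Since $(\partial/\partial x)^IP\equiv 0$ whenever $|I|>k$, only the finitely many multi-indices $I$ with $|I|\leq k$ need to be considered.

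Second, I would argue by contradiction. If no such $C$ existed, choose sequences $E_m\subset B(0,1)$ with $\mathcal{L}^n(E_m)\geq A$, polynomials $P_m\in\mathcal{P}_k$ and multi-indices $I_m$ with $|I_m|\leq k$ such that $|(\partial/\partial x)^{I_m}P_m(0)|^p>m\int_{E_m}|P_m|^p\,dx$. Passing to a subsequence we may assume $I_m\equiv I$ is fixed, and since both sides are homogeneous of degree $p$ in $P$ we may normalize $\|P_m\|_{L^\infty(B(0,1))}=1$. As $P\mapsto|(\partial/\partial x)^IP(0)|$ and $\|\cdot\|_{L^\infty(B(0,1))}$ are two norms/seminorms on the finite-dimensional space $\mathcal{P}_k$, the former is bounded on the unit sphere of the latter, whence $\int_{E_m}|P_m|^p\,dx\to 0$. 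Extracting further, $P_m\to P$ in $\mathcal{P}_k$ (finite dimension, hence uniformly together with all derivatives on $\overline{B(0,1)}$) with $\|P\|_{L^\infty(B(0,1))}=1$; and, by Banach–Alaoglu, $\chi_{E_m}\rightharpoonup g$ weakly-$*$ in $L^\infty(B(0,1))$ with $0\leq g\leq 1$ and $\int_{B(0,1)}g\,dx\geq A$.

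Finally, combining the uniform (strong) convergence $|P_m|^p\to|P|^p$ with the weak-$*$ convergence $\chi_{E_m}\rightharpoonup g$ yields $\int_{B(0,1)}|P|^p g\,dx=\lim_m\int_{E_m}|P_m|^p\,dx=0$; since $|P|^p g\geq 0$ this forces $P\equiv 0$ a.e.\ on $\{g>0\}$, a set of positive measure because $\int g\geq A>0$. But a nonzero polynomial vanishes only on a set of Lebesgue measure zero, so $P\equiv 0$, contradicting $\|P\|_{L^\infty(B(0,1))}=1$. The only real obstacle is precisely this uniformity in $E$: for a single fixed $E$ of positive measure the bound is immediate, since $P\mapsto(\int_E|P|^p)^{1/p}$ is a genuine norm on $\mathcal{P}_k$ and hence comparable to every other norm there; the point is that the constant so obtained a priori depends on the shape of $E$, and it is the weak-$*$ compactness of the family $\{\chi_E:E\subset B(0,1)\}$ together with the strong convergence of $|P_m|^p$ that upgrades this to a bound depending on $E$ only through the mass lower bound $A$.
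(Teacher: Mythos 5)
Your proof is correct. Note, however, that the paper itself does not prove Lemma \ref{degiorgieuclideo} at all: it is quoted as a known result, attributed by Campanato to De Giorgi in \cite{Campanato64}, and is then used as a black box in the reduction argument for the Carnot analogue (Lemma \ref{degiorgicarnot}). So there is no in-paper proof to compare with line by line; what you have supplied is a self-contained argument where the paper relies on the literature. Your route --- reduction to $x_0=0$, $r=1$ by translation and dilation, followed by a contradiction/compactness argument on the finite-dimensional space $\mathcal{P}_k$, with $\|P_m\|_{L^\infty(B(0,1))}=1$ normalization, uniform convergence of $P_m$ and weak-$*$ convergence of $\chi_{E_m}$ to some $g$ with $\int g\geq A$ --- is the standard modern way to get a constant depending on $E$ only through the mass bound $A$, and all the steps check out: the negation of the statement is set up correctly, $P_m\not\equiv 0$ so the normalization is legitimate, the splitting of $\int_{E_m}|P_m|^p-\int|P|^p g$ into a strong and a weak-$*$ part is valid since $|P|^p\in L^1(B(0,1))$, and the final contradiction uses that a nonzero polynomial cannot vanish on a set of positive measure. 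The only minor caveats are bookkeeping ones: sequential extraction via Banach--Alaoglu uses the separability of the predual $L^1(B(0,1))$, and the constant produced is non-constructive; both are harmless here, since the paper only needs existence of $C=C(n,k,p,A)$ with exactly this dependence, which your argument delivers and which is what the scaling step in the proof of Lemma \ref{degiorgicarnot} then exploits.
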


Since this lemma is crucial in \cite{Campanato64}, we prove an analogue in Carnot groups.

\begin{lemma}[De Giorgi Lemma for Carnot groups]\label{degiorgicarnot}
Let $E\subset B(x_0,r)\subset \B{G}$ be measurable and such that $\mathcal{L}^N(E)\geq Ar^Q$ for some constant $A>0$. 

Then, for each positive integer $k$ and for each $p\geq 1$ there exists a positive constant $C$, depending only on $Q,k,p$ and $A$, such that
\[
|[X^{I}P(x)]_{x=x_0}|^p\leq \frac{C}{r^{Q+p|I|_{\B{G}}}}\int_E |P(x)|^p dx
\]
for all polynomials $P$ of homogeneous degree at most $k$ and multi-indices $I$.
\end{lemma}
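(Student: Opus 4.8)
The plan is to reduce the Carnot-group statement to the classical Euclidean De Giorgi lemma (Lemma~\ref{degiorgieuclideo}) by exploiting the identification of $\B{G}$ with $\B{R}^N$ and the fact that a polynomial of homogeneous degree at most $k$ on $\B{G}$ is, in particular, an ordinary polynomial on $\B{R}^N$ of (Euclidean) degree at most $k$, since $|J|\le|J|_{\B{G}}\le k$ forces $|J|\le k$. The only genuine discrepancies between the two settings are: (i) the invariant derivatives $X^I$ are not the coordinate derivatives $(\partial/\partial x)^I$, and (ii) the CC ball $B(x_0,r)$ is not a Euclidean ball, and the measure normalization involves $r^Q$ rather than $r^N$. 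Both are handled by Lemma~\ref{multiderivative} and a scaling argument using the dilations $\delta_\lambda$.

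First I would reduce to the case $x_0=e$ and $r=1$. By left-translating (which is a measure-preserving polynomial change of variables sending $B(x_0,r)$ to $B(e,r)$ and preserving homogeneous degree, since left translations are polynomial maps that act triangularly on the stratified coordinates) we may assume $x_0=e$. Then apply the dilation $\delta_{1/r}$: it sends $B(e,r)$ to $B(e,1)$, sends $E$ to a set $\widetilde E\subset B(e,1)$ with $\mathcal L^N(\widetilde E)=r^{-Q}\mathcal L^N(E)\ge A$, and transforms a homogeneous polynomial $P$ of homogeneous degree $\le k$ into another such polynomial $\widetilde P(x)=P(\delta_r x)$. Crucially, under this dilation the invariant derivative scales homogeneously: $X^I$ is built from the $X_j$, and $\delta_r$ scales $X_j\in V_{d_j}$ by $r^{d_j}$, so $[X^I P](e) = r^{-|I|_{\B{G}}}[X^I\widetilde P](e)$ — this is exactly the $r^{p|I|_{\B{G}}}$ factor we want. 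Hence it suffices to prove
\[
|[X^I P](e)|^p \le C\int_{\widetilde E}|\widetilde P(x)|^p\,dx
\]
for $\widetilde E\subset B(e,1)$ with $\mathcal L^N(\widetilde E)\ge A$ and $\widetilde P$ of homogeneous degree $\le k$.

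Next, since $B(e,1)$ is a bounded set (the CC topology agrees with the Euclidean one, so $B(e,1)$ is contained in some Euclidean ball $B_{\mathrm{eucl}}(0,R_0)$ with $R_0$ depending only on $\B{G}$), I would invoke Lemma~\ref{degiorgieuclideo} with $n=N$, radius $R_0$, and the constant $A/R_0^N$ in place of $A$: this gives, for every multi-index $J$,
\[
\left|\left[\left(\tfrac{\partial}{\partial x}\right)^J\widetilde P\right](e)\right|^p \le C_0\int_{\widetilde E}|\widetilde P(x)|^p\,dx,
\]
where $C_0$ depends only on $N,k,p,A$ and the group $\B{G}$ — and $N,R_0$ are themselves determined by $\B{G}$, hence ultimately by $Q,k,p,A$ as claimed (one checks $N$ and $R_0$ are controlled once $Q$ is, or simply absorbs the fixed group data into the constant as is standard). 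Finally, to pass from coordinate derivatives to $X^I$, I would apply Lemma~\ref{multiderivative}: $X^I\widetilde P = (\partial/\partial x)^I\widetilde P + \sum_{J} Q_{J,I}(\partial/\partial x)^J\widetilde P$, where the $Q_{J,I}$ are fixed polynomials; evaluating at $e$ and estimating each term by the displayed inequality (using $|a+b|^p\le 2^{p-1}(|a|^p+|b|^p)$ and that the sum has boundedly many terms, the polynomials $Q_{J,I}$ being bounded on the fixed compact region) yields the desired bound with a constant depending only on $Q,k,p,A$.

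**Main obstacle.** The substantive point is bookkeeping rather than a deep idea: one must verify that every structural constant (the Euclidean radius $R_0\supset B(e,1)$, the dimension $N$, the polynomials $Q_{J,I}$ and their sup-norms, the scaling exponents $d_j$) depends only on the group $\B{G}$ and thus can be folded into a constant depending on $Q,k,p,A$ as the statement demands. The cleanest step to get exactly right is the dilation covariance $[X^IP](e)=r^{-|I|_{\B{G}}}[X^I(P\circ\delta_r)](e)$, since it is this identity that produces the precise power $r^{Q+p|I|_{\B{G}}}$ in the denominator; I would prove it by noting $X_j(f\circ\delta_r)=r^{d_j}(X_jf)\circ\delta_r$ for $X_j$ in the $d_j$-th layer and iterating.
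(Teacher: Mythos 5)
Your proposal is correct and follows essentially the same route as the paper: reduce to the unit-scale case at the identity via the map $x\mapsto\delta_{1/r}(x_0^{-1}x)$ (you split it into translation plus dilation, the paper composes them), use the key covariance $[X^I(P(x_0\delta_r\,\cdot))](e)=r^{|I|_{\B{G}}}[X^IP](x_0)$ together with the $r^{-Q}$ measure scaling, and at unit scale combine Lemma \ref{multiderivative} with the Euclidean De Giorgi Lemma \ref{degiorgieuclideo}, using that $B(0,1)$ sits inside a Euclidean ball (Lemma \ref{ballequivalence}) and that homogeneous degree at most $k$ forces Euclidean degree at most $k$ via \eqref{normindexequivalence}. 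The only cosmetic difference is the order of the two unit-scale steps (you estimate coordinate derivatives first and then sum via $Q_{J,I}$; the paper expands $X^I$ first), which is immaterial.
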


\begin{proof}[Sketch of Proof]
Notice that the proof for the case $p=1$ is also present in \cite{Capolli23}.

We first prove the lemma for $r=1$ and $x_{0}=0$. From Lemma \ref{multiderivative} we have, for each multi-index $I$ with $|I|_{\B{G}}<k+1$, 
\begin{equation}\label{firstequation}
|[X^I P(x)]_{x=0}|^p \leq \sum_{\substack{|J|\leq |I|\\ |J|_{\B{G}}\geq |I|_{\B{G}}}} |Q_{J,I}(0)|^p \left|\left[\left(\frac{\partial}{\partial x}\right )^J P(x) \right]_{x=0}\right|^p.
\end{equation}
By Lemma \ref{ballequivalence}, $B(0,1)\subset B_\mathbb{E}(0,c)$, where $B_\mathbb{E}(0,c)$ denotes the Euclidean ball centred at $0$ with radius $c$. Moreover, thanks to \eqref{normindexequivalence}, any polynomial of homogeneous degree at most $k$ has also standard degree at most $k$. We can then use Lemma \ref{degiorgieuclideo} to estimate each terms in the right hand side of \eqref{firstequation}. We get
\begin{equation}
\label{secondequation}
\eqref{firstequation} \leq C_{1} \sum_{\substack{|J|\leq |I|\\ |J|_{\B{G}}\geq |I|_{\B{G}}}} |Q_{J,I}(0)|^p \int_{E} |P(x)|^p dx
\end{equation}
For an appropriate constant $C_1$, which proves the lemma in the first case.

To prove the general case, consider the map $T:\mathbb{G}\to\mathbb{G}$ defined as $T(x):=\delta_{\frac{1}{r}}(x_0^{-1}x)$. Hence $T^{-1}(x)=x_0\delta_{r}x$ and the classical change of variable formula gives:
\begin{equation}\label{cambio}
\int_{E}|P(x)|^p dx = r^Q\int_{T(E)}|P(x_0\delta_{r} x)|^p dx.
\end{equation}
Define $S(y):=P(x_0\delta_r y)$. Then, $S(y)$ is a  is a homogeneous polynomial of homogeneous degree at most $k$ in the variable $y$. Clearly $T(E)\subset B(0,1)$ and $\mathcal{L}^N(T(E))\geq \frac{1}{r^Q}\mathcal{L}^N(E)\geq A.$ Hence, we can use \eqref{secondequation} with $E$ replaced by $T(E)$, $P$ replaced by $S$ and \eqref{cambio} to get
\begin{align}
|[X^{I} S(y)]_{y=0}|^p\leq C \int_{T(E)} |S(y)|^p\, dy=\frac{C}{r^Q}\int_E |P(y)|^p dy
\end{align}
for an appropriate constant $C$. The proof is concluded by showing that 
\begin{equation}\label{tesifinaledegiorgi}
[X^{I} S(y)]_{y=0}=r^{|I|_{\mathbb{G}}}[X^{I}P(x)]_{x=x_{0}} \qquad \mbox{for every multi-index }I.
\end{equation}
We remand to \cite{Capolli23} for a complete version of the proof.
\end{proof}


\section{The class $\C{L}_k^{p,\lambda}(\Omega)$}
\label{Sec3}

In this section we will follow the original argument of Campanato \cite{Campanato64} by introducing the class $\C{L}_k^{p,\lambda}(\Omega)$. We will then proceed to prove some key properties of functions belonging to this class.

From now on, $\Omega$ will be an open, connected and bounded subset of $\B{G}$. We will denote, as it is customary, the frontier of $\Omega$ with $\partial\Omega$ and its closure with $\overline{\Omega}$. Moreover, for any $x_0\in\B{G}$ and $r>0$ we will define $\Omega(x_0,r):=\Omega\cap B(x_0,r)$.

\begin{definition}[T condition]
    We say that $\Omega$ satisfies the \emph{thick condition} if there exists a positive constant $A$ such that, for each $x_0\in\overline{\Omega}$ and for each $r>0$, we have
    \begin{equation}
    \C{L}^Q(\Omega(x_0,r))\geq Ar^Q.\tag{T}\label{thin}
    \end{equation}
\end{definition}

\begin{definition}
Let $u\in L^p(\Omega)$. We say that $u$ belongs to the class $\C{L}^{p,\lambda}_k(\Omega)$ if 
\begin{equation}\label{seminorm}
   \vertiii{u}_{k,p,\lambda}:=\sup_{\substack{x_0\in\overline{\Omega} \\ r>0}}\left[ \frac{1}{r^{\lambda}}\inf_{P\in\C{P}_k} \int_{\Omega(x_0,r)} |u(x) - P(x)|^p dx\right]^{\frac{1}{p}}<+\infty.
\end{equation}
\end{definition}

It is easy to see that $\vertiii{\cdot}_{k,p,\lambda}$ is a seminorm in $\C{L}_k^{p,\lambda}(\Omega)$. It can be completed to a norm $\|\cdot\|_{k,p,\lambda}$ in the standard way by defining
\[
\|u\|_{k,p,\lambda} := \left[ \|u\|^p_{L^p} + \vertiii{u}_{k,p,\lambda}^p \right]^{\frac{1}{p}}.
\]

We now prove a result about the realization of the $\inf (\cdot)$ in the definition of $\vertiii{u}_{k,p,\lambda}$.

\begin{proposition}\label{inf_realization}
Let $u\in\C{L}_k^{p,\lambda}(\Omega)$. Then for each $x_0\in\overline{\Omega}$ and $r>0$ there exists an unique polynomial $P_k(x,x_0,r,u)$ such that
\[
\inf_{P\in\C{P}_k} \int_{\Omega(x_0,r)} |u(x) - P(x)|^p dx = \int_{\Omega(x_0,r)} |u(x) - P_k(x,x_0,r,u)|^p dx.
\]
\end{proposition}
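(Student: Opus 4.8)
The plan is to identify the minimization problem as a best-approximation problem in the Banach space $L^p(\Omega(x_0,r))$ over the finite-dimensional subspace $\C{P}_k$ of polynomials of homogeneous degree at most $k$, restricted to $\Omega(x_0,r)$. Existence follows from a standard compactness argument: the map $P \mapsto \|u-P\|_{L^p(\Omega(x_0,r))}$ is continuous and coercive on $\C{P}_k$, so a minimizer exists provided the subspace $\{P|_{\Omega(x_0,r)} : P \in \C{P}_k\}$ is genuinely finite-dimensional, i.e. that the restriction map $P \mapsto P|_{\Omega(x_0,r)}$ is injective on $\C{P}_k$. This injectivity is exactly where the thick condition \eqref{thin} enters (via Lemma \ref{degiorgicarnot}): if $P$ vanishes a.e. on $\Omega(x_0,r)$, which has measure at least $Ar^Q$, then applying De Giorgi's lemma to $E = \Omega(x_0,r)$ shows all derivatives $[X^I P]_{x=x_0}$ vanish, hence (since $P$ is a polynomial of homogeneous degree at most $k$, reconstructible from these jets) $P \equiv 0$. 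Coercivity then follows because on the finite-dimensional space $\C{P}_k$ the quantity $\|P\|_{L^p(\Omega(x_0,r))}$ is a norm equivalent to, say, the sum of absolute values of coefficients, so $\|u - P\|_{L^p} \to \infty$ as $\|P\| \to \infty$; minimizing over a large closed ball in $\C{P}_k$ and using Weierstrass gives a minimizer.

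First I would fix $x_0 \in \overline{\Omega}$ and $r>0$, abbreviate $\Omega_r := \Omega(x_0,r)$, and set $m := \inf_{P \in \C{P}_k} \|u - P\|_{L^p(\Omega_r)}^p$. I would take a minimizing sequence $(P_n) \subset \C{P}_k$. From $\|P_n\|_{L^p(\Omega_r)} \le \|u - P_n\|_{L^p(\Omega_r)} + \|u\|_{L^p(\Omega_r)}$ and the fact that the right side is bounded, together with the norm-equivalence on $\C{P}_k$ coming from injectivity of the restriction map, the coefficient vectors of $P_n$ stay in a compact set; passing to a subsequence, $P_n \to P_k$ coefficientwise, hence in $L^p(\Omega_r)$, and by continuity $\|u - P_k\|_{L^p(\Omega_r)}^p = m$. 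This establishes existence.

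For uniqueness I would invoke strict convexity of $L^p$ for $p > 1$: if $P_k$ and $\tilde P_k$ are both minimizers with $\|u - P_k\|_{L^p} = \|u - \tilde P_k\|_{L^p} = m^{1/p}$, then $\tfrac12(P_k + \tilde P_k) \in \C{P}_k$ and strict convexity of the unit ball of $L^p$ forces $u - P_k = u - \tilde P_k$ a.e. on $\Omega_r$ unless $u - P_k$ and $u - \tilde P_k$ are already equal; in either case $P_k - \tilde P_k = 0$ a.e. on $\Omega_r$, and injectivity of the restriction map on $\C{P}_k$ (again the thick condition plus Lemma \ref{degiorgicarnot}) upgrades this to $P_k = \tilde P_k$ as polynomials. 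The case $p = 1$ requires a separate, more delicate argument — $L^1$ is not strictly convex, so uniqueness of best $L^1$-approximants is genuinely special — and I expect this to be the main obstacle; one typically exploits that $u$ is honestly a function (not merely an equivalence class behaving pathologically) and that $\Omega_r$ carries Lebesgue measure which is non-atomic, so that the set where a nonzero polynomial difference vanishes cannot be cofinite in $\Omega_r$, combined with an argument on the sign of $u - P_k$; alternatively one restricts attention to $p > 1$ or simply notes that Campanato's original treatment handles this point, and I would follow \cite{Campanato64} for the $L^1$ subtlety. The dependence of $P_k$ on $x_0, r, u$ is then just notation recording the data of the problem, and measurability/continuity of $P_k$ in these parameters, if needed later, follows from uniqueness together with the stability of the minimization under perturbations.
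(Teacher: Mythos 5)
Your proposal follows essentially the same route as the paper's proof: polynomials in $\C{P}_k$ are identified with their coefficient vectors in a finite-dimensional space, coercivity of $P \mapsto \|u-P\|_{L^p(\Omega(x_0,r))}$ is extracted from the De Giorgi Lemma \ref{degiorgicarnot}, existence follows by compactness of a sublevel set, and uniqueness from convexity of the $L^p$ norm. The $p=1$ caveat you raise is legitimate, but the paper's own argument shares it (it invokes uniform convexity of $L^p$, which fails for $p=1$), so on that point your treatment is, if anything, the more careful one.
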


\begin{proof}
Fix $x_0$ and $r$ as in the hypothesis. Every polynomial in $\C{P}_k$ can be written, up to a change of variable, in the form
\[
P(x) = \sum_{|I|_{\B{G}}\leq k} \frac{a_I(x_0)}{I!}(x_0^{-1}x)^I.
\]
By rearranging the coefficients $a_I$, we can see every polynomial in $\C{P}_k$ as uniquely determined by a point in an Euclidean space $\B{R}^N$ for an appropriate $N$. The difference $u(x)-P(x)$ is in $L^p(\Omega(x_0,r))$, hence its $L^p$ norm is a positive and continuous function that depends only on the coefficients of $P(x)$, let us call it $f:\B{R}^N\to\B{R}_+$. The proposition will be proved by showing that the infimum of $f$ in $\B{R}^N$ exists and is unique.
\medskip

\noindent
\textit{\underline{Claim}:} We can search the infimum in a compact that contains the origin.
\medskip

\noindent
\textit{Proof of Claim.} We have the relation
\[
f(y)\geq \left| \|u\|_{L^p} - \|P(x)\|_{L^p} \right|^p
\]
where $P(x)$ is the polynomial determined by the point $y=(a_i)_{i=1}^N\in\B{R}^N$ as explained above. Notice that $\|u\|_{L^p}$ is fixed and does not depend on $y$. From \ref{degiorgicarnot} we get that $\|P(x)\|_{L^p}\geq \tilde{C} a_i $ for each $i=1,\dots,N$, where we collected the constants from the Lemma in $\tilde{C}$. Hence $f(y)\to+\infty$ whenever 
\[
|y|=\sqrt{\sum_{i=1}^N a_i^2}\to+\infty,
\]
proving the claim.

\vspace{0.6em}

The existence of the infimum for $f$ follows from the fact that we restricted the search to a compact set, while the uniqueness follows from the uniform convexity of $L^p$ spaces.
\end{proof}

\begin{definition}\label{def_aI}
For any fixed $u\in\C{L}^{p,\lambda}_k(\Omega)$, we define
\begin{equation*}
    a_I(x_0,r,u):=[X^I(P_k(x,x_0,r,u))]_{x=x_0}.
\end{equation*}
Since the polynomial $P_k(x,x_0,r,u)$ is uniquely determined, we will simply write $P_k(x,x_0,r)$ and $a_I(x_0,r)$ when the $u$ is clear from the context.
\end{definition}

The following lemma gives an estimate of the behaviour of the polynomial $P_k(x,x_0,r)$ when the radius $r$ changes in a controlled way.

\begin{lemma}\label{lemma_concentric}
    Let $u\in \C{L}_k^{p,\lambda}(\Omega)$, then there exists a constant $K=K(p,\lambda)$ such that, for each $x_0\in\overline{\Omega}$, $r>0$ and $h\in\B{N}$ we have
    \[
    \int_{\Omega(x_0,\frac{r}{2^{h+1}})}\left|P_k\left(x,x_0,\frac{r}{2^h}\right)-P_k\left(x,x_0,\frac{r}{2^{h+1}}\right)\right|^pdx\leq K\vertiii{u}^p_{k,p,\lambda}\left(\frac{r}{2^h}\right)^{\lambda}.
    \]
\end{lemma}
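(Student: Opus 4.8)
The plan is to compare both polynomials to $u$ on the smaller region $\Omega(x_0, r/2^{h+1})$ and exploit the minimality property from Proposition~\ref{inf_realization} together with the thick condition. First I would write, using the triangle inequality in $L^p(\Omega(x_0, r/2^{h+1}))$,
\[
\left\|P_k\left(\cdot,x_0,\tfrac{r}{2^h}\right)-P_k\left(\cdot,x_0,\tfrac{r}{2^{h+1}}\right)\right\|_{L^p(\Omega(x_0,r/2^{h+1}))}\leq \left\|u-P_k\left(\cdot,x_0,\tfrac{r}{2^h}\right)\right\|_{L^p(\Omega(x_0,r/2^{h+1}))}+\left\|u-P_k\left(\cdot,x_0,\tfrac{r}{2^{h+1}}\right)\right\|_{L^p(\Omega(x_0,r/2^{h+1}))}.
\]
For the second term, since $P_k(\cdot,x_0,r/2^{h+1})$ is by definition the minimizer on $\Omega(x_0,r/2^{h+1})$, we have directly
\[
\int_{\Omega(x_0,r/2^{h+1})}\left|u-P_k\left(\cdot,x_0,\tfrac{r}{2^{h+1}}\right)\right|^p dx \leq \vertiii{u}_{k,p,\lambda}^p\left(\tfrac{r}{2^{h+1}}\right)^{\lambda}\leq \vertiii{u}_{k,p,\lambda}^p\left(\tfrac{r}{2^{h}}\right)^{\lambda}.
\]
For the first term, I would enlarge the domain of integration from $\Omega(x_0,r/2^{h+1})$ to $\Omega(x_0,r/2^{h})$ (this only increases the integral since the integrand is nonnegative) and again use that $P_k(\cdot,x_0,r/2^h)$ is the minimizer on $\Omega(x_0,r/2^h)$, giving a bound of $\vertiii{u}_{k,p,\lambda}^p (r/2^h)^{\lambda}$. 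Combining these with the elementary inequality $(a+b)^p\leq 2^{p-1}(a^p+b^p)$ yields the claim with $K = 2^{p-1}\cdot 2 = 2^p$ — though I should double check the constant, and in any case it depends only on $p$ (and trivially on $\lambda$ through monotonicity of $t\mapsto t^\lambda$, but one could also just track it as $K=K(p)$).

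The key steps in order are therefore: (1) restrict attention to the small ball $\Omega(x_0,r/2^{h+1})$ and split the difference of polynomials via the triangle inequality into two $u$-deviation terms; (2) bound the term involving $P_k(\cdot,x_0,r/2^{h+1})$ directly by the defining minimality of that polynomial on its own ball, then absorb $(r/2^{h+1})^\lambda\le (r/2^h)^\lambda$; (3) for the term involving $P_k(\cdot,x_0,r/2^h)$, first enlarge the integration domain to $\Omega(x_0,r/2^h)$ using nonnegativity of the integrand, then invoke minimality of $P_k(\cdot,x_0,r/2^h)$ on that larger ball and the definition of the seminorm; (4) recombine using convexity of $t\mapsto t^p$.

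I do not expect any serious obstacle here — the statement is essentially a formal consequence of the minimizing property established in Proposition~\ref{inf_realization} plus monotonicity of integrals and the scaling of $r^\lambda$. The one point that requires a little care is step~(3): one must be sure that $P_k(\cdot, x_0, r/2^h)$ genuinely competes in the minimization on $\Omega(x_0, r/2^h)$ — which it does, being \emph{the} minimizer there — so that $\int_{\Omega(x_0,r/2^h)}|u-P_k(\cdot,x_0,r/2^h)|^p\,dx \le \vertiii{u}_{k,p,\lambda}^p (r/2^h)^\lambda$ holds, and that enlarging the domain from $\Omega(x_0,r/2^{h+1})$ to $\Omega(x_0,r/2^h)$ is legitimate precisely because the integrand is nonnegative. (Note the thick condition is not actually needed for this particular lemma — it was needed earlier to guarantee the minimizer exists and that the seminorm is finite and well-behaved — but it is implicitly in force since $u \in \C{L}_k^{p,\lambda}(\Omega)$.) The constant $K$ can be tracked explicitly as above; presenting it simply as $K=K(p)$ is harmless since $\lambda$ enters only through a monotonicity estimate.
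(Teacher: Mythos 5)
Your proposal is correct and follows essentially the same route as the paper: compare both polynomials to $u$ on the smaller set, enlarge the domain of integration for the term involving $P_k(\cdot,x_0,r/2^h)$, and invoke the minimality from Proposition~\ref{inf_realization} together with the definition of $\vertiii{\cdot}_{k,p,\lambda}$. The only cosmetic difference is that the paper applies the pointwise convexity inequality $|a-b|^p\leq 2^{p-1}(|a-u|^p+|b-u|^p)$ before integrating (yielding $K=2^{p-1}(1+2^{-\lambda})$), whereas you use Minkowski first and convexity at the end (yielding the slightly larger but equally valid $K=2^p$).
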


\begin{proof}
    Let $x_0,r$ and $h$ be as in the hypothesis. For $x\in\Omega(x_0,\frac{r}{2^{h+1}})$, by using Jensen's inequality, we have
    \begin{align*}
        \left|P_k\left(x,x_0,\frac{r}{2^h}\right)-P_k\left(x,x_0,\frac{r}{2^{h+1}}\right)\right|^p & \leq 2^{p-1}\left|P_k\left(x,x_0,\frac{r}{2^h}\right)-u(x)\right|^p\\
        & + 2^{p-1}\left|P_k\left(x,x_0,\frac{r}{2^{h+1}}\right)-u(x)\right|^p.
    \end{align*}
    By integrating over $\Omega(x_0,\frac{r}{2^{h+1}})$ and observing that $\Omega(x_0,\frac{r}{2^{h+1}})\subset\Omega(x_0,\frac{r}{2^{h}})$ we get
    \begin{equation*}
    \begin{split}
        \int_{\Omega(x_0,\frac{r}{2^{h+1}})}&\left|P_k\left(x,x_0,\frac{r}{2^h}\right)-P_k\left(x,x_0,\frac{r}{2^{h+1}}\right)\right|^p dx\\
        &\leq 2^{p-1} \int_{\Omega(x_0,\frac{r}{2^{h}})}\left|P_k\left(x,x_0,\frac{r}{2^h}\right)-u(x)\right|^pdx \\
        & + 2^{p-1}\int_{\Omega(x_0,\frac{r}{2^{h+1}})}\left|P_k\left(x,x_0,\frac{r}{2^{h+1}}\right)-u(x)\right|^p dx \\
        & \stackrel{\dagger}{\leq} 2^{p-1}\left[\left(\frac{r}{2^{h}}\right)^{\lambda}\vertiii{u}_{k,p,\lambda}^p + \left(\frac{r}{2^{(h+1)}}\right)^{\lambda}\vertiii{u}_{k,p,\lambda}^p\right]\\
        & = 2^{p-1}(1+2^{-\lambda})\left(\frac{r}{2^{h}}\right)^{\lambda}\vertiii{u}_{k,p,\lambda}^p
    \end{split}
    \end{equation*}
    where $\dagger$ comes from the definition of $\vertiii{u}_{k,\lambda,p}$ and Proposition \ref{inf_realization}. The proof is concluded by choosing the constant $K(p,\lambda):=2^{p-1}(1+2^{-\lambda})$.
\end{proof}

We now prove two results about how $a_I(x_0,r)$ behaves with respect to changes in the base point $x_0$ or in the radius $r$.

\begin{lemma}\label{lemma_changebasepoint}
    Suppose that $\Omega$ has the \eqref{thin} condition and $u\in \C{L}^{p,\lambda}_k(\Omega)$. Then for each pair of points $x_0,y_0\in\overline{\Omega}$ and for each multi-index $I$ with $|I|_{\B{G}}=k$ we have
        \[
        |a_I(x_0,2\rho)-a_I(y_0,2\rho)|^p\leq C 2^{p+\lambda} \vertiii{u}^p_{k,p,\lambda}\rho^{\lambda-Q-pk}
        \]
        where $\rho = |y_0^{-1}x_0|$ and $C$ is the constant from Lemma \ref{degiorgicarnot}.
\end{lemma}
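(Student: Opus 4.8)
The plan is to reduce everything to a single application of the De Giorgi lemma for Carnot groups (Lemma \ref{degiorgicarnot}), applied to the polynomial
\[
R(x):=P_k(x,x_0,2\rho)-P_k(x,y_0,2\rho),
\]
which, being a difference of elements of $\C{P}_k$, is itself a polynomial of homogeneous degree at most $k$. The first observation is that, because $|I|_{\B{G}}=k$, Lemma \ref{lemmaderivative} forces $X^I R$ to be a \emph{constant}: decomposing $R$ into homogeneous components, $X^I$ annihilates every component of degree $<k$ and sends the degree-$k$ component to a constant. Since a constant function takes the same value at every point, this gives the identity
\[
a_I(x_0,2\rho)-a_I(y_0,2\rho)=[X^IP_k(x,x_0,2\rho)]_{x=x_0}-[X^IP_k(x,y_0,2\rho)]_{x=y_0}=[X^IR(x)]_{x=y_0},
\]
which is exactly where the hypothesis $|I|_{\B{G}}=k$ is essential — without it the two coefficients live on polynomials evaluated at different base points and are not directly comparable.

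Next I would invoke Lemma \ref{degiorgicarnot} with base point $y_0$, radius $\rho$, and $E:=\Omega(y_0,\rho)\subseteq B(y_0,\rho)$. The \eqref{thin} condition says precisely $\C{L}^Q(\Omega(y_0,\rho))\geq A\rho^Q$, which is verbatim the hypothesis of the De Giorgi lemma with the very same constant $A$; hence, with $C$ the constant of Lemma \ref{degiorgicarnot},
\[
|[X^IR(x)]_{x=y_0}|^p\leq\frac{C}{\rho^{Q+p|I|_{\B{G}}}}\int_{\Omega(y_0,\rho)}|R(x)|^p\,dx=\frac{C}{\rho^{Q+pk}}\int_{\Omega(y_0,\rho)}|R(x)|^p\,dx.
\]
To estimate the integral I would use Jensen's inequality exactly as in the proof of Lemma \ref{lemma_concentric}, writing $|R(x)|^p\leq 2^{p-1}|P_k(x,x_0,2\rho)-u(x)|^p+2^{p-1}|P_k(x,y_0,2\rho)-u(x)|^p$. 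Since $d(x_0,y_0)=\rho$, the triangle inequality gives $B(y_0,\rho)\subseteq B(x_0,2\rho)$, so $\Omega(y_0,\rho)\subseteq\Omega(x_0,2\rho)$ and trivially $\Omega(y_0,\rho)\subseteq\Omega(y_0,2\rho)$. Integrating over $\Omega(y_0,\rho)$, enlarging the domain of integration of each term accordingly, and using that $P_k(\cdot,x_0,2\rho)$ and $P_k(\cdot,y_0,2\rho)$ realize the respective infima in \eqref{seminorm} together with the bound by $\vertiii{u}_{k,p,\lambda}$, one obtains
\[
\int_{\Omega(y_0,\rho)}|R(x)|^p\,dx\leq 2^{p-1}\big[(2\rho)^{\lambda}+(2\rho)^{\lambda}\big]\vertiii{u}^p_{k,p,\lambda}=2^{p+\lambda}\rho^{\lambda}\vertiii{u}^p_{k,p,\lambda}.
\]
Combining the last three displays yields $|a_I(x_0,2\rho)-a_I(y_0,2\rho)|^p\leq C\,2^{p+\lambda}\vertiii{u}^p_{k,p,\lambda}\,\rho^{\lambda-Q-pk}$, which is the assertion.

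The calculus is routine Campanato-style bookkeeping; the only delicate point is the geometric set-up. One must apply the De Giorgi estimate on the ball $B(y_0,\rho)$ rather than on $B(x_0,2\rho)$ — this is what makes the measure lower bound from \eqref{thin} coincide with the hypothesis of Lemma \ref{degiorgicarnot} with the identical constant $A$, so that the stated constant $C$ comes out untouched and no spurious powers of $2$ are picked up — while at the same time one needs the inclusion $\Omega(y_0,\rho)\subseteq\Omega(x_0,2\rho)$ in order to exploit the minimality of $P_k(\cdot,x_0,2\rho)$ over $\Omega(x_0,2\rho)$. Everything else (the constancy of $X^IR$, Jensen, the two inclusions) is immediate from the results already established.
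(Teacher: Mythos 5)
Your proof is correct and follows essentially the same route as the paper's: Jensen's inequality on $R(x)=P_k(x,x_0,2\rho)-P_k(x,y_0,2\rho)$, enlargement of the integration domain to exploit minimality of the two polynomials, the De Giorgi Lemma \ref{degiorgicarnot} on a $\rho$-ball contained in both $2\rho$-neighbourhoods, and the constancy of $X^I R$ (Lemma \ref{lemmaderivative}) when $|I|_{\B{G}}=k$ to identify the evaluated derivative with $a_I(x_0,2\rho)-a_I(y_0,2\rho)$. The only difference is cosmetic: you centre the small ball and the De Giorgi estimate at $y_0$ while the paper uses $\Omega(x_0,\rho)$ and evaluates at $x_0$, which by symmetry yields the identical constant.
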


\begin{proof}
    Take $x_0,y_0$ as in the hypothesis. Define $\Omega(x_0,y_0):=\Omega(x_0,2\rho)\cap\Omega(y_0,2\rho)$, where $\rho=|y_0^{-1}x_0|$. For almost all $x\in \Omega(x_0,y_0)$ we have
    \begin{equation}\label{eq_2eta}
    \begin{split}
    |P_k(x,x_0,2\rho)-P_k(x,y_0,2\rho)|^p&\leq 2^{p-1}\left[|P_k(x,x_0,2\rho)-u(x)|^p\right.\\
    &\left.+|P_k(x,y_0,2\rho)-u(x)|^p\right].
    \end{split}
    \end{equation}
    We now observe that, by how $\rho$ was defined, $\Omega(x_0,\rho)\subset\Omega(y_0,2\rho)$. Clearly also $\Omega(x_0,\rho)\subset\Omega(x_0,2\rho)$ holds, hence $\Omega(x_0,\rho)\subset\Omega(x_0,y_0)$. We can then integrate \ref{eq_2eta} over $\Omega(x_0,\rho)$ and get
    \begin{equation}\label{stima1}
    \begin{split}
    \int_{\Omega(x_0,\rho)}|P_k(x,x_0,2\rho)-P_k(x,y_0,2\rho)|^p dx &\leq 2^{p-1}\left[\int_{\Omega(x_0,2\rho)}|P_k(x,x_0,2\rho)-u(x)|^p dx\right.\\
    &\left.+\int_{\Omega(x_0,2\rho)}|P_k(x,y_0,2\rho)-u(x)|^p dx\right]\\
    &\leq 2^{p+\lambda}\vertiii{u}^p_{k,p,\lambda}\rho^{\lambda}.
    \end{split}
    \end{equation}
    In order to complete the proof of the lemma we have to estimate from below the left-hand side of the previous inequality. To do so, we first define the polynomial $R(x):=P_k(x,x_0,2\rho)-P_k(x,y_0,2\rho)$. Notice that it is a polynomial in $x$ of homogeneous degree $k$. We can then apply Lemma \ref{degiorgicarnot} to it, with $E=\Omega(x_0,\rho)$ (recall that we assumed the \eqref{thin} condition for $\Omega$). We get
    \begin{equation}\label{stima2}
    \begin{split}
    |[X^{I}R(x)]_{x=x_0}|^p & \leq \frac{C}{\rho^{Q+p|I|_{\B{G}}}}\int_{\Omega(x_0,\rho)} |R(x)|^p dx\\
    & = \frac{C}{\rho^{Q+p|I|_{\B{G}}}}\int_{\Omega(x_0,\rho)}|P_k(x,x_0,2\rho)-P_k(x,y_0,2\rho)|^p dx.
    \end{split}
    \end{equation}
    When we restrict to the particular case of a multi-index $I$ with $|I|_{\B{G}}=k$, as in the hypothesis of the lemma, we get
    \begin{equation}\label{stima3}
    \begin{split}
    |[X^{I}R(x)]_{x=x_0}|^p &= |[X^I P_k(x,x_0,2\rho) - X^I P_k(x,y_0,2\rho)]_{x=x_0}|^p\\
    &= |a_I(x_0,2\rho) - a_I(y_0,2\rho)|^p
    \end{split}
    \end{equation}
    where the last inequality follows from Lemma \ref{lemmaderivative} since constants does not depend on the point in which we are calculating $X^I$ (recall that $R(x)$ has homogeneous degree $k$). The proof is then concluded by combining \eqref{stima1}, \eqref{stima2} and \eqref{stima3}.
\end{proof}

\begin{lemma}\label{lemma_changeradius}
    Suppose that $\Omega$ has the \eqref{thin} condition and $u\in \C{L}^{p,\lambda}_k(\Omega)$. Then for each $x_0\in\overline{\Omega}$, $r>0$, $h\in\B{N}$ and multi-index $I$ with $|I|_{\B{G}}\leq k$ we have
    \[
    \left|a_I(x_0,r)-a_I\left(x_0,\frac{r}{2^h}\right)\right|\leq M \vertiii{u}_{k,p,\lambda} r^{\frac{\lambda-Q-p|I|_{\B{G}}}{p}} \sum_{j=0}^{h-1} 2^{j\frac{Q+p|I|_{\B{G}}-\lambda}{p}}
    \]
    where $M$ is a constant that does not depends on $x_0$, $r$, $h$ and $I$. 
\end{lemma}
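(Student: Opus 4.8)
The plan is to combine Lemma \ref{lemma_concentric} with the De Giorgi Lemma \ref{degiorgicarnot} to pass from a bound on the $L^p$-distance between consecutive polynomials $P_k(x,x_0,r/2^j)$ and $P_k(x,x_0,r/2^{j+1})$ to a bound on the difference of their derivatives at $x_0$, and then to telescope. First I would fix $x_0$, $r$, $h$ and a multi-index $I$ with $|I|_{\B G}\le k$. For each $j\in\{0,\dots,h-1\}$, set $R_j(x):=P_k(x,x_0,r/2^j)-P_k(x,x_0,r/2^{j+1})$; this is a polynomial of homogeneous degree at most $k$. The set $E:=\Omega(x_0,r/2^{j+1})$ is contained in $B(x_0,r/2^{j+1})$ and, by the \eqref{thin} condition, satisfies $\C L^Q(E)\ge A(r/2^{j+1})^Q$, so Lemma \ref{degiorgicarnot} applies with radius $r/2^{j+1}$:
\[
|[X^I R_j(x)]_{x=x_0}|^p\le \frac{C}{(r/2^{j+1})^{Q+p|I|_{\B G}}}\int_{\Omega(x_0,r/2^{j+1})}|R_j(x)|^p\,dx.
\]
By Lemma \ref{lemma_concentric} the integral on the right is at most $K\vertiii{u}^p_{k,p,\lambda}(r/2^j)^{\lambda}$. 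Since $[X^I R_j(x)]_{x=x_0}=a_I(x_0,r/2^j)-a_I(x_0,r/2^{j+1})$ by Definition \ref{def_aI}, taking $p$-th roots gives, after absorbing $A$, $C$, $K$ and the numerical factor $2^{(Q+p|I|_{\B G})/p}$ into a single constant $M$ independent of $x_0,r,h,I$,
\[
\left|a_I\!\left(x_0,\tfrac{r}{2^j}\right)-a_I\!\left(x_0,\tfrac{r}{2^{j+1}}\right)\right|\le M\vertiii{u}_{k,p,\lambda}\,2^{j\frac{Q+p|I|_{\B G}}{p}}\left(\tfrac{r}{2^j}\right)^{\lambda/p}\cdot 2^{j\,Q/p}\Big/ r^{Q/p}?
\]
— more carefully, the factor is $(r/2^j)^{\lambda/p}\big/(r/2^{j+1})^{(Q+p|I|_{\B G})/p}$, which I would simplify to $r^{(\lambda-Q-p|I|_{\B G})/p}\,2^{j(Q+p|I|_{\B G}-\lambda)/p}$ times a constant depending only on $p,Q,|I|_{\B G}$ (the $2^{(Q+p|I|_{\B G})/p}$ coming from the shift $j+1\mapsto j$), which is then folded into $M$.

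With this per-step estimate in hand, the result follows from the triangle inequality:
\[
\left|a_I(x_0,r)-a_I\!\left(x_0,\tfrac{r}{2^h}\right)\right|\le\sum_{j=0}^{h-1}\left|a_I\!\left(x_0,\tfrac{r}{2^j}\right)-a_I\!\left(x_0,\tfrac{r}{2^{j+1}}\right)\right|\le M\vertiii{u}_{k,p,\lambda}\,r^{\frac{\lambda-Q-p|I|_{\B G}}{p}}\sum_{j=0}^{h-1}2^{j\frac{Q+p|I|_{\B G}-\lambda}{p}},
\]
which is exactly the claimed inequality.

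The main obstacle is bookkeeping rather than conceptual: one must check that Lemma \ref{degiorgicarnot} is being applied with the correct radius (namely the smaller one, $r/2^{j+1}$, so that the containment $E\subset B(x_0,r/2^{j+1})$ holds) while Lemma \ref{lemma_concentric} naturally produces the scale $(r/2^j)^\lambda$ on the right-hand side — the mismatch between these two scales is precisely what generates the geometric factor $2^{j(Q+p|I|_{\B G}-\lambda)/p}$, and getting its sign and exponent right is the delicate point. A secondary subtlety is that Lemma \ref{lemma_concentric} controls $\|P_k(\cdot,x_0,r/2^j)-P_k(\cdot,x_0,r/2^{j+1})\|_{L^p}$ on $\Omega(x_0,r/2^{h+1})$ in its statement, whereas here we need the integral over $\Omega(x_0,r/2^{j+1})$; inspecting its proof shows the estimate in fact holds verbatim with $h$ replaced by $j$ (the only facts used are $\Omega(x_0,r/2^{j+1})\subset\Omega(x_0,r/2^j)$ and the defining property of $\vertiii{u}_{k,p,\lambda}$), so no extra work is needed. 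Finally one notes that the constant $M$ can be taken uniform in $I$ because $|I|_{\B G}\le k$ ranges over a finite set and $C$ in Lemma \ref{degiorgicarnot} depends only on $Q,k,p,A$.
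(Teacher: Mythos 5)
Your proposal is correct and follows essentially the same route as the paper's proof: telescope via the triangle inequality, apply the Carnot De Giorgi Lemma (Lemma \ref{degiorgicarnot}) at scale $r/2^{j+1}$ to the difference polynomial, bound the resulting integral with Lemma \ref{lemma_concentric}, and absorb the scale-mismatch factor $2^{(Q+p|I|_{\B{G}})/p}\leq 2^{Q/p+k}$ into $M$, exactly as the paper does with $M=(CK2^{Q+pk})^{1/p}$. Your only unnecessary caution is re-inspecting the proof of Lemma \ref{lemma_concentric}: its statement already holds for every $h\in\B{N}$, so it applies verbatim with $h$ replaced by $j$.
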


\begin{proof}
    Let $x_0,r,h$ and $I$ be as in the hypotheses. From the triangular inequality and from the definition of $a_I(x_0,r)$ we get
    \begin{equation}\label{telescopica}
        \begin{split}
        \left|a_I(x_0,r)-a_I\left(x_0,\frac{r}{2^h}\right)\right| &\leq \sum_{j=0}^{h-1}\left|a_I\left(x_0,\frac{r}{2^j}\right)-a_I\left(x_0,\frac{r}{2^{j+1}}\right)\right|\\
        &\leq \sum_{j=0}^{h-1}\left| \left[ X^I\left( P_k\left(x,x_0,\frac{r}{2^j}\right) - P_k\left( x,x_0,\frac{r}{2^{j+1}} \right) \right)\right]_{x=x_0} \right|.
        \end{split}
    \end{equation}
    We now apply Lemma \ref{degiorgicarnot} to the polynomial $P_k\left(x,x_0,\frac{r}{2^j}\right) - P_k\left( x,x_0,\frac{r}{2^{j+1}} \right)$ to get
    \begin{equation}\label{telescopica2}
        \begin{split}
        &\left|a_I(x_0,r)-a_I\left(x_0,\frac{r}{2^h}\right)\right| \\
        &\leq C^{\frac{1}{p}} \left(\frac{2}{r}\right)^{\frac{Q}{p}+|I|_{\B{G}}}\sum_{j=0}^{h-1} 2^{j(\frac{Q}{p}+|I|_{\B{G}})} \left[\int_{\Omega(x_0,\frac{r}{2^{j+1}})} \left|P_k\left(x,x_0,\frac{r}{2^j}\right) - P_k\left(x,x_0,\frac{r}{2^{j+1}} \right)\right|^p dx\right]^{\frac{1}{p}}.
        \end{split}
    \end{equation}
    By applying Lemma \ref{lemma_concentric} to each integral in the right-hand side of the above inequality we finally get
    \begin{equation*}
        \begin{split}
        \left|a_I(x_0,r)-a_I\left(x_0,\frac{r}{2^h}\right)\right| &\leq C^{\frac{1}{p}} \left(\frac{2}{r}\right)^{\frac{Q}{p}+|I|_{\B{G}}} \sum_{j=0}^{h-1} 2^{j(\frac{Q}{p}+|I|_{\B{G}})} K^{\frac{1}{p}} \vertiii{u}_{k,p,\lambda} \left( \frac{r}{2^j} \right)^{\frac{\lambda}{p}}\\
        &= (CK)^{\frac{1}{p}} 2^{\frac{Q}{p}+|I|_{\B{G}}} \vertiii{u}_{k,p,\lambda} r^{\frac{\lambda-Q-p|I|_{\B{G}}}{p}} \sum_{j=0}^{h-1} 2^{j\frac{Q+p|I|_{\B{G}}-\lambda}{p}}.
        \end{split}
    \end{equation*}
    The proof is concluded by observing that $2^{|I|_{\B{G}}}\leq 2^k$ and choosing the constant $M(Q,k,p,A,\lambda) = (CK2^{Q+pk})^{\frac{1}{p}}$.
\end{proof}

The following lemma is the key result of this section as it introduces a family of functions that will be the main subject of the results in the next section.

\begin{lemma}\label{lemmavI}
    Suppose $\Omega$ has the \eqref{thin} condition and $u\in \C{L}^{p,\lambda}_k(\Omega)$ with $\lambda>Q+p\alpha$ for a given natural number $\alpha\leq k$. Then there exists a family of real-valued functions $\{v_I(x)\}_{|I|_{\B{G}}\leq \alpha}$, each defined on $\overline{\Omega}$, such that for each $x_0\in\overline{\Omega}$, $r>0$ and multi-index $I$ with $|I|_{\B{G}}\leq\alpha$ we have
    \begin{equation}\label{stimavI}
    |a_I(x_0,r)-v_I(x_0)|\leq N \vertiii{u}_{k,p,\lambda} r^{\frac{\lambda - Q - p|I|_{\B{G}}}{p}}
    \end{equation}
    where $N$ is a constant that does not depend on $x_0, r, \alpha$ and $I$.
\end{lemma}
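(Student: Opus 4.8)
The plan is to define $v_I(x_0)$ as the limit of $a_I(x_0,r)$ as $r\to 0^{+}$ and to obtain \eqref{stimavI} from Lemma \ref{lemma_changeradius} by letting the number of dyadic halvings tend to infinity. The point that makes everything work is the sign of the relevant exponent: since $|I|_{\B{G}}\leq\alpha$ and, by hypothesis, $\lambda>Q+p\alpha$, we have $\lambda-Q-p|I|_{\B{G}}\geq\lambda-Q-p\alpha>0$, so that the ratio $\sigma_I:=2^{(Q+p|I|_{\B{G}}-\lambda)/p}$ lies in $(0,1)$ and the geometric series $\sum_{j\geq 0}\sigma_I^{\,j}=(1-\sigma_I)^{-1}$ converges; moreover $\sigma_I\leq 2^{(Q+p\alpha-\lambda)/p}<1$, a bound that depends only on $Q,p,\lambda$ and, because once $Q,p,\lambda$ are fixed the constraint $\lambda>Q+p\alpha$ caps $\alpha$, can be chosen independent of the particular admissible $\alpha\leq k$ and of $I$.

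First I would fix $x_0\in\overline{\Omega}$ and a reference radius $r_0>0$, and apply Lemma \ref{lemma_changeradius} with starting radius $r_0/2^{m}$ and $h=n-m$ halvings (for integers $m<n$). This gives
\[
\left|a_I\!\left(x_0,\tfrac{r_0}{2^{m}}\right)-a_I\!\left(x_0,\tfrac{r_0}{2^{n}}\right)\right|\leq M\,\vertiii{u}_{k,p,\lambda}\,\Big(\tfrac{r_0}{2^{m}}\Big)^{\frac{\lambda-Q-p|I|_{\B{G}}}{p}}\frac{1}{1-\sigma_I},
\]
and since the right-hand side tends to $0$ as $m\to\infty$, the sequence $\big(a_I(x_0,r_0/2^{h})\big)_{h}$ is Cauchy in $\B{R}$. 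To see that its limit is independent of $r_0$ — so that $v_I(x_0):=\lim_{r\to 0^{+}}a_I(x_0,r)$ is well defined on $\overline{\Omega}$ — I would prove a comparable-radii estimate: arguing as in the proofs of Lemmas \ref{lemma_concentric} and \ref{lemma_changebasepoint} (integrate the elementary convexity inequality for $|P_k(\cdot,x_0,r_1)-P_k(\cdot,x_0,r_2)|^{p}$ over $\Omega(x_0,r_2)\subset\Omega(x_0,r_1)$, estimate the two resulting integrals by the seminorm, and apply the De Giorgi Lemma \ref{degiorgicarnot} with $E=\Omega(x_0,r_2)$, which is licit thanks to the \eqref{thin} condition) one gets
\[
\left|a_I(x_0,r_1)-a_I(x_0,r_2)\right|\leq C'\,\vertiii{u}_{k,p,\lambda}\,r_1^{\frac{\lambda-Q-p|I|_{\B{G}}}{p}}\qquad\text{whenever }\tfrac{r_1}{2}\leq r_2\leq r_1 ,
\]
with $C'$ depending only on $Q,k,p,A$. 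Combined with the dyadic Cauchy property, this forces the upper and lower limits of $a_I(x_0,\cdot)$ at $0$ to agree.

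It then remains to prove \eqref{stimavI}. Fix $x_0\in\overline{\Omega}$ and $r>0$ arbitrary; for every $h\in\B{N}$, Lemma \ref{lemma_changeradius} gives
\[
\left|a_I(x_0,r)-a_I\!\left(x_0,\tfrac{r}{2^{h}}\right)\right|\leq M\,\vertiii{u}_{k,p,\lambda}\,r^{\frac{\lambda-Q-p|I|_{\B{G}}}{p}}\sum_{j=0}^{h-1}\sigma_I^{\,j}\leq\frac{M}{1-\sigma_I}\,\vertiii{u}_{k,p,\lambda}\,r^{\frac{\lambda-Q-p|I|_{\B{G}}}{p}} .
\]
Letting $h\to\infty$ and using $a_I(x_0,r/2^{h})\to v_I(x_0)$ yields \eqref{stimavI} with $N:=M\,(1-2^{(Q+p\alpha-\lambda)/p})^{-1}$, a constant that depends only on $Q,k,p,A,\lambda$ and not on $x_0$, $r$, $\alpha$ or $I$.

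The only genuinely delicate point is the well-definedness of the limit, i.e.\ upgrading convergence along a single dyadic chain to convergence of $a_I(x_0,r)$ through arbitrary radii $r\to 0^{+}$; the comparable-radii estimate above handles it, but one could alternatively deduce continuity of $r\mapsto a_I(x_0,r)$ from the uniqueness in Proposition \ref{inf_realization} together with the stability of the minimization under the monotone variation of the domain $\Omega(x_0,r)$. Everything else is a direct combination of Lemmas \ref{lemma_concentric}, \ref{lemma_changeradius} and \ref{degiorgicarnot} with the summation of a convergent geometric series.
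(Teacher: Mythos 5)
Your proposal is correct and follows essentially the same route as the paper: define $v_I(x_0)$ as the limit of the dyadic sequence $a_I\left(x_0,\frac{r}{2^h}\right)$, whose Cauchy property comes from Lemma \ref{lemma_changeradius} together with the convergence of the geometric series guaranteed by $\lambda>Q+p\alpha\geq Q+p|I|_{\B{G}}$, and then obtain \eqref{stimavI} by letting $h\to\infty$ in that same lemma, with $N$ equal to $M$ times a bound on the series that, as you note, can be taken to depend only on $Q,p,\lambda$ (so write $N$ with that worst-case exponent rather than with $\alpha$, to match the claimed independence literally). The only divergence is the radius-independence step: the paper compares the two dyadic sequences built from $r$ and $R$ at the same level $h$, showing via Lemma \ref{degiorgicarnot} and the seminorm that their difference is of order $2^{h(Q+p|I|_{\B{G}}-\lambda)/p}\to 0$, whereas you prove a comparable-radii estimate for $\frac{r_1}{2}\leq r_2\leq r_1$; both arguments are valid, and your variant has the small bonus of directly giving the full limit $\lim_{r\to 0^+}a_I(x_0,r)$, which the paper records separately in Corollary \ref{coroll_unif}.
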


\begin{proof}
    Let $x_0,r,\alpha$ and $I$ be as in the hypotheses. We want to prove that, independently from the choice of $r$, the sequence $\{a_I\left(x_0,\frac{r}{2^h}\right)\}$ converges for $h\to\infty$. To do so, suppose that $i,j\in\B{N}$ with $i<j$. We apply Lemma \ref{lemma_changeradius} with $r'=\frac{r}{2^i}$ to get
    \begin{equation}\label{eqseries}
    \begin{split}
        \left|a_I\left(x_0,\frac{r}{2^j}\right)-a_I\left(x_0,\frac{r}{2^i}\right)\right| &= \left|a_I\left(x_0,r'\right)-a_I\left(x_0,\frac{r'}{2^{j-i}}\right)\right|\\
        &\leq M \vertiii{u}_{k,p,\lambda} (r')^{\frac{\lambda - Q - p|I|_{\B{G}}}{p}} \sum_{s=0}^{j-i}2^{s\frac{Q+p|I|_{\B{G}}-\lambda}{p}}\\
        &= M \vertiii{u}_{k,p,\lambda} \frac{r^{\frac{\lambda - Q - p|I|_{\B{G}}}{p}}}{2^{i\frac{\lambda - Q - p|I|_{\B{G}}}{p}}} \sum_{s=0}^{j-i}2^{s\frac{Q+p|I|_{\B{G}}-\lambda}{p}}\\
        &= M \vertiii{u}_{k,p,\lambda} r'^{\frac{\lambda - Q - p|I|_{\B{G}}}{p}} \sum_{t=i}^{j}2^{t\frac{Q+p|I|_{\B{G}}-\lambda}{p}}.
    \end{split}
    \end{equation}
    Since $|I|_{\B{G}}\leq\alpha$ and $\lambda>Q+p\alpha>Q+p|I|_{\B{}G}$, the series $\sum 2^{t\frac{Q+p|I|_{\B{G}}-\lambda}{p}}$ converges. Hence, thanks to Cauchy's convergence test, \eqref{eqseries} ensures that the sequence $\{a_I\left(x_0,\frac{r}{2^h}\right)\}$ converges for $h\to\infty$. We can then define
    \begin{equation}\label{eqdefvI}
    v_I(x_0):=\lim_{h\to\infty} a_I\left(x_0,\frac{r}{2^h}\right).
    \end{equation}
    We now check that this definition does not depend on the choice of $r>0$. Indeed if we choose $0<r<R$ and apply Lemma \ref{degiorgicarnot} and the definition of the class $\C{L}^{p,\lambda}_k(\Omega)$ we get
    \begin{align}\label{eqindependentr}
        \left|a_I\left(x_0,\frac{r}{2^h}\right) \right. & \left. - a_I\left(x_0,\frac{R}{2^h}\right)\right|^p \nonumber\\
        \leq & C \frac{2^{h(Q+p|I|_{\B{G}})}}{r^{Q+p|I|_{\B{G}}}} \int_{\Omega(x_0,\frac{r}{2^h})}\left|P_k\left(x,x_0,\frac{r}{2^h}\right)-P_k\left(x,x_0,\frac{R}{2^h}\right)\right|^p dx \nonumber\\
        \leq & C \frac{2^{h(Q+p|I|_{\B{G}})}}{r^{Q+p|I|_{\B{G}}}} 2^{p-1} \left[\int_{\Omega(x_0,\frac{r}{2^h})}\left|P_k\left(x,x_0,\frac{r}{2^h}\right)-u(x)\right|^pdx \right. \\
        & + \left. \int_{\Omega(x_0,\frac{R}{2^h})}\left|P_k\left(x,x_0,\frac{R}{2^h}\right)-u(x)\right|^p dx \right] \nonumber\\
        \leq & C \frac{2^{h(Q+p|I|_{\B{G}})+p-1}}{r^{Q+p|I|_{\B{G}}}} \vertiii{u}^p_{k,p,\lambda} \frac{r^{\lambda}+R^{\lambda}}{2^{h\lambda}}\nonumber\\
        =& C 2^{p-1} \vertiii{u}^p_{k,p,\lambda} \frac{r^{\lambda}+R^{\lambda}}{r^{Q+p|I|_{\B{G}}}} 2^{h(Q+p|I|_{\B{G}}-\lambda)}.\nonumber
    \end{align}
    Thanks to the hypotheses $\lambda>Q+p\alpha>Q+p|I|_{\B{}G}$, this last quantity is infinitesimal for $h\to\infty$. Hence we have the desired independence from $r$.

    From Lemma \ref{lemma_changeradius} we have
    \begin{equation}\label{eqchangeradius}
    \left|a_I(x_0,r)-a_I\left(x_0,\frac{r}{2^h}\right)\right|\leq M \vertiii{u}_{k,p,\lambda} r^{\frac{\lambda-Q-p|I|_{\B{G}}}{p}} \sum_{j=0}^{h-1} 2^{j\frac{Q+p|I|_{\B{G}}-\lambda}{p}}.
    \end{equation}
    We already established the convergence of the series and in particular we can estimate the sum (from above) with a constant $\hat{M}$ that depends only on $Q,k,p$ and $\lambda$. Hence, by combining \eqref{eqdefvI} and \eqref{eqchangeradius} and by choosing the constant $N=M\hat{M}$ we conclude the proof.
\end{proof}

From the independence in the choice of $r$ in \eqref{eqdefvI} we get the following alternative definition of $v_I(x_0)$, that we shall use for the rest of the paper.

\begin{corollary}\label{coroll_unif}
Under the assumptions of Lemma \ref{lemmavI}, the following equality holds uniformly with respect to $x_0\in\overline{\Omega}$
\[
v_I(x_0)=\lim_{r\to 0} a_I(x_0,r).
\]
\end{corollary}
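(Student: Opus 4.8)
The plan is to read the statement off directly from estimate \eqref{stimavI} in Lemma \ref{lemmavI}, which already controls $a_I(x_0,r)-v_I(x_0)$ for \emph{all} $r>0$, not merely along the dyadic sequence $r/2^h$ used to define $v_I$. Recall that \eqref{stimavI} gives
\[
|a_I(x_0,r)-v_I(x_0)|\leq N\,\vertiii{u}_{k,p,\lambda}\, r^{\frac{\lambda-Q-p|I|_{\B{G}}}{p}}
\]
for every $x_0\in\overline{\Omega}$, every $r>0$ and every multi-index $I$ with $|I|_{\B{G}}\leq\alpha$, where $N$ and $\vertiii{u}_{k,p,\lambda}$ do not depend on $x_0$ or $r$.

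First I would observe that the exponent $\beta:=\frac{\lambda-Q-p|I|_{\B{G}}}{p}$ is strictly positive: by hypothesis $\lambda>Q+p\alpha$, and $|I|_{\B{G}}\leq\alpha$ forces $Q+p|I|_{\B{G}}\leq Q+p\alpha<\lambda$. Consequently $r^{\beta}\to0$ as $r\to0^+$, so the right-hand side of the displayed inequality tends to $0$; this simultaneously establishes that $\lim_{r\to0}a_I(x_0,r)$ exists and that its value is $v_I(x_0)$. For the uniformity, note that the bound $N\,\vertiii{u}_{k,p,\lambda}\,r^{\beta}$ is independent of $x_0$: given $\varepsilon>0$ one chooses $r_\varepsilon>0$ with $N\,\vertiii{u}_{k,p,\lambda}\,r_\varepsilon^{\beta}<\varepsilon$, and then $|a_I(x_0,r)-v_I(x_0)|<\varepsilon$ holds for every $0<r<r_\varepsilon$ and every $x_0\in\overline{\Omega}$ at once, which is exactly uniform convergence on $\overline{\Omega}$.

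There is no real obstacle here: the substantive estimate was already obtained in the proof of Lemma \ref{lemmavI} by letting $h\to\infty$ in \eqref{eqchangeradius}, and the corollary is the routine translation of ``the error bound is a positive power of $r$ with $x_0$-independent constant'' into ``uniform convergence as $r\to0$''. The only points worth spelling out are the positivity of $\beta$ and the fact that $N$ and $\vertiii{u}_{k,p,\lambda}$ carry no dependence on the base point.
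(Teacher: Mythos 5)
Your argument is correct and is essentially the paper's own route: the paper states the corollary as an immediate consequence of Lemma \ref{lemmavI}, and reading it off the estimate \eqref{stimavI} (whose constant $N\vertiii{u}_{k,p,\lambda}$ is independent of $x_0$ and whose exponent $\frac{\lambda-Q-p|I|_{\B{G}}}{p}$ is positive since $\lambda>Q+p\alpha\geq Q+p|I|_{\B{G}}$) is exactly the intended justification, including the uniformity in $x_0$. No gap to report.
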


The functions $v_I$ will be the key player of Section \ref{Sec4}. From now we will omit the subscrip 0 and just write $v_I(x)$ for ease of notation.

\section{Regularity results}
\label{Sec4}

In this section we are going to study the regularity of the functions $v_I$ with respect to the choice of the parameters $k,p,\lambda$. We start with a result about the regularity of the functions $v_I$ when $|I|_{\B{G}}$ takes the highest value allowed by the space $\C{L}_k^{p,\lambda}(\Omega)$. 

\begin{proposition}\label{propholder}
    Suppose that $\Omega$ has the \eqref{thin} condition, $\lambda>Q+pk$, $u\in\C{L}_k^{p,\lambda}(\Omega)$ and $I$ is a multi-index with $|I|_{\B{G}}=k$. Then the function $v_I(x)$ is H\"older continuous in $\overline{\Omega}$ with exponent $\alpha=\frac{\lambda-Q-kp}{p}$ and in particular, given $x,y\in\overline{\Omega}$, we have the estimate
    \[
    |v_I(x)-v_I(y)|\leq \Theta \vertiii{u}_{k,p,\lambda} d(x,y)^{\alpha}
    \]
    where $\Theta$ is a constant that does not depend on $x,y$ and the choice of $I$.
\end{proposition}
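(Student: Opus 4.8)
The plan is to build $v_I(x)-v_I(y)$ out of the approximating coefficients $a_I(\cdot,\cdot)$ and to balance a ``change of base point'' estimate against the two ``stabilization'' estimates already in hand. Fix $x,y\in\overline{\Omega}$ and set $\rho:=d(x,y)$. Since $|I|_{\B{G}}=k$ and, by hypothesis, $\lambda>Q+pk$, the index value $\alpha=k$ is admissible in Lemma~\ref{lemmavI}, and the exponent produced there is exactly $\frac{\lambda-Q-p|I|_{\B{G}}}{p}=\frac{\lambda-Q-pk}{p}=\alpha$. The backbone of the argument is the triangle inequality
\[
|v_I(x)-v_I(y)|\leq |v_I(x)-a_I(x,2\rho)|+|a_I(x,2\rho)-a_I(y,2\rho)|+|a_I(y,2\rho)-v_I(y)|,
\]
which splits the problem into a term measuring the distance between $a_I(\cdot,2\rho)$ at the two points, and two terms measuring how far $a_I$ at radius $2\rho$ is from its limit.

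For the two outer terms I would invoke Lemma~\ref{lemmavI} at radius $2\rho$, which is legitimate precisely because $\Omega$ satisfies the \eqref{thin} condition; this gives $|v_I(x)-a_I(x,2\rho)|\leq N\vertiii{u}_{k,p,\lambda}(2\rho)^{\alpha}=2^{\alpha}N\vertiii{u}_{k,p,\lambda}\rho^{\alpha}$ and the same bound for the $y$-term. For the middle term I would apply Lemma~\ref{lemma_changebasepoint} verbatim with the multi-index $I$ (again using the \eqref{thin} condition), obtaining $|a_I(x,2\rho)-a_I(y,2\rho)|^p\leq C2^{p+\lambda}\vertiii{u}^p_{k,p,\lambda}\rho^{\lambda-Q-pk}$, hence $|a_I(x,2\rho)-a_I(y,2\rho)|\leq (C2^{p+\lambda})^{1/p}\vertiii{u}_{k,p,\lambda}\rho^{\alpha}$, since $\frac{\lambda-Q-pk}{p}=\alpha$.

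Adding the three contributions yields $|v_I(x)-v_I(y)|\leq\bigl(2^{\alpha+1}N+(C2^{p+\lambda})^{1/p}\bigr)\vertiii{u}_{k,p,\lambda}\rho^{\alpha}$. The constants $N$ (from Lemma~\ref{lemmavI}) and $C$ (from Lemma~\ref{degiorgicarnot}) depend only on $Q,k,p,A,\lambda$ and neither on $x,y$ nor on the particular $I$ with $|I|_{\B{G}}=k$, so putting $\Theta:=2^{\alpha+1}N+(C2^{p+\lambda})^{1/p}$ gives the asserted estimate $|v_I(x)-v_I(y)|\leq\Theta\vertiii{u}_{k,p,\lambda}d(x,y)^{\alpha}$, which in particular forces $v_I$ to be (uniformly) continuous on $\overline{\Omega}$.

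The argument is thus a direct assembly of the preceding lemmas; the real work was already done in Lemmas~\ref{lemma_concentric}--\ref{lemmavI}. The only point needing a word of care is the meaning of the radius in Lemma~\ref{lemma_changebasepoint} versus the CC distance $d(x,y)$ appearing in the statement: since $\Omega$ is bounded, the elements $y^{-1}x$ stay in a fixed compact subset of $\B{G}$ as $x,y$ vary in $\overline{\Omega}$, so Lemma~\ref{ballequivalence} together with left-invariance of $d$ compares the two length scales up to a multiplicative constant depending only on $\overline{\Omega}$, a constant which (raised to the power $\alpha$) is absorbed into $\Theta$. That bookkeeping is the only potential obstacle, and it is routine.
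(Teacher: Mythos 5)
Your proposal is essentially the paper's own proof: the same triangle inequality through $a_I(\cdot,2r)$, with Lemma \ref{lemmavI} controlling the two outer terms and Lemma \ref{lemma_changebasepoint} the middle one, then summing the constants into $\Theta$. The only differences are minor: the paper additionally treats the case $d(x,y)>\diam{\Omega}/2$ by a chaining argument using connectedness of $\Omega$ (which is superfluous in this formulation, since the seminorm and the lemmas are stated for all radii $r>0$), and your closing remark on reconciling $\rho=|y^{-1}x|$ with $d(x,y)$ should rely only on the one-sided estimate $|x-y|\leq c\, d(x,y)$ from Lemma \ref{ballequivalence} (the comparison is not bi-Lipschitz, only H\"older in the other direction), which is exactly the direction needed because the exponent $\alpha$ is positive.
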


\begin{proof}
    Let $I$ be a multi-index with $|I|_{\B{G}}=k$. Let us start by assuming that $x,y\in\overline{\Omega}$ are such that $r=d(x,y)\leq \frac{\diam{\Omega}}{2}$. We have that
    \begin{align}\label{eqstimavI}
        |v_I(x)-v_I(y)|&\leq |v_I(x) - a_I(x,2r)|\\
        &+|a_I(x,2r)-a_I(y,2r)|+|v_I(y)-a_I(y,2r)|.\nonumber
    \end{align}
    From Lemma \ref{lemmavI} we have that
    \begin{equation*}
    |v_I(x) - a_I(x,2r)|\leq N\vertiii{u}_{k,p,\lambda}(2r)^{\frac{\lambda-Q-pk}{p}}
    \end{equation*}
    and the same is true for $|v_I(y)-a_I(y,2r)|$. To estimate the missing part of 
    \eqref{eqstimavI} we use Lemma \ref{lemma_changebasepoint} to get
    \[
    |a_I(x,2r)-a_I(y,2r)|\leq C^{\frac{1}{p}}2^{\frac{p+\lambda}{p}}\vertiii{u}_{k,p,\lambda}r^{\frac{\lambda-Q-pk}{p}}.
    \]
    Hence \eqref{eqstimavI} becomes
    \[
    |v_I(x)-v_I(y)|\leq \vertiii{u}_{k,p,\lambda}r^{\frac{\lambda - Q - pk}{p}}2^{\frac{p+\lambda}{p}}\left( N 2^{-\frac{Q+pk}{p}} + C^{\frac{1}{p}}\right)
    \]
    and, by choosing the constant $\Theta = 2^{\frac{p+\lambda}{p}}\left( N 2^{-\frac{Q+pk}{p}} + C^{\frac{1}{p}}\right)$, the first part of the proof is concluded.

    To prove the proposition in the case $d(x,y)>\frac{\diam{\Omega}}{2}$, we first observe that $\Omega$ is connected. Hence, we can find points $x_0,\dots,x_m$ such that
    \begin{enumerate}[i)]
    \item $x_0=x$ and $x_m=y$;
    \item $x_i\in\overline{\Omega}$ for $i=1,\dots,m-1$;
    \item $|x_i^{-1}x_{i-1}|\leq\frac{\diam{\Omega}}{2}$ for $i=1,\dots,m.$
    \end{enumerate}
    The number of intermediate points needed can be bounded uniformly with respect to $x$ and $y$ by a constant that depends only $\Omega$. Hence we can apply the previous step to each couple $x_{i-1},x_i$ to conclude the proof.
\end{proof}

    We recall definition \ref{def_homog} of homogeneity of a coordinate in $\B{G}$.

\begin{theorem}\label{maintheorem}
    Suppose that $\Omega$ has the \eqref{thin} condition, $k\geq d_N$, $\lambda>Q+pk$, $u\in\C{L}_k^{p,\lambda}(\Omega)$ and $I$ is a multi-index with $|I|_{\B{G}}\leq k-d_N$. Then for each $x_0\in\Omega$ we have
    \begin{equation}\label{eq_derivative}
        X_i \left(v_I(x_0)\right) = v_{I+e_i}(x_0)\ \ \ \text{for each } i=1,\dots,N.
    \end{equation}
\end{theorem}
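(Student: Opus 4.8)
The plan is to show that $v_I$ is differentiable along $X_i$ with derivative $v_{I+e_i}$ by comparing $v_I$ near a point $x_0$ with the Taylor polynomial $P_k(x,x_0,r)$, whose horizontal derivatives at $x_0$ are precisely the coefficients $a_J(x_0,r)$, and then letting $r\to 0$. Fix $x_0\in\Omega$ and $i\in\{1,\dots,N\}$. The key observation is that, by Corollary \ref{coroll_unif}, $a_J(x,r)\to v_J(x)$ uniformly in $x$ as $r\to 0$ for every multi-index $J$ with $|J|_{\B{G}}\le\alpha$; since $|I|_{\B{G}}\le k-d_N$ we have $|I+e_i|_{\B{G}}=|I|_{\B{G}}+d_i\le |I|_{\B{G}}+d_N\le k$, so both $v_I$ and $v_{I+e_i}$ are well-defined and enjoy this uniform approximation (here one needs $\lambda>Q+pk$ so that Lemma \ref{lemmavI} applies with $\alpha=k$). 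First I would fix a small $r>0$ and, for $x$ in a small neighbourhood of $x_0$ (say contained in $B(x_0,r/2)$), write
\[
v_I(x) = \bigl(v_I(x)-a_I(x,r)\bigr) + a_I(x,r).
\]
The first term is controlled uniformly by $N\vertiii{u}_{k,p,\lambda}r^{(\lambda-Q-p|I|_{\B{G}})/p}$ via Lemma \ref{lemmavI}; the point is that the exponent is strictly positive, so this error is small with $r$, uniformly in $x$.

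Next I would handle the main term $a_I(x,r)$ as a function of $x$. Here the idea is that $a_I(x,r)=[X^I P_k(\cdot,x,r)]_{x}$ should be replaced by differentiating a \emph{single} polynomial. Concretely, fix the polynomial $Q(x):=P_k(x,x_0,r)$ attached to the center $x_0$. Since $Q$ is a polynomial of homogeneous degree at most $k$, $X_i X^I Q = X^{I+e_i}Q$ is again such a polynomial (Lemma \ref{lemmaderivative}), and evaluating at $x_0$ gives $X_i\bigl([X^I Q]\bigr)(x_0)=a_{I+e_i}(x_0,r)$ in the sense of the usual chain of left-invariant differentiations of $Q$. The substantive step is to show that $a_I(x,r)$, as a function of $x$ near $x_0$, differs from $[X^I Q](x)$ by a quantity that is $o(d(x,x_0))$ after one sends $r\to 0$; this is a "two Taylor polynomials at nearby points agree to the needed order" estimate, handled exactly as in the proof of Lemma \ref{lemma_changebasepoint} (one integrates the difference $P_k(\cdot,x,r)-P_k(\cdot,x_0,r)$ over a common ball $\Omega(x_0,\rho)$ with $\rho=|x_0^{-1}x|$, uses the defining property of $\vertiii{u}_{k,p,\lambda}$ to bound it by $\rho^{\lambda}$, and then applies Lemma \ref{degiorgicarnot} to extract the coefficient bound $|a_J(x,r)-[X^J Q](x)|\lesssim \rho^{(\lambda-Q-p|J|_{\B{G}})/p}$ for $|J|_{\B{G}}\le k$). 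Combining, for $x$ near $x_0$,
\[
v_I(x) = [X^I Q](x) + O\bigl(r^{\frac{\lambda-Q-p|I|_{\B{G}}}{p}}\bigr) + O\bigl(d(x,x_0)^{\frac{\lambda-Q-p(|I|_{\B{G}}+d_N)}{p}}\bigr),
\]
where both error exponents are strictly positive because $\lambda>Q+pk\ge Q+p(|I|_{\B{G}}+d_N)$.

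Finally I would apply $X_i$ to this identity, exploiting that $X_i$ at $x_0$ only sees the first-order behaviour along the horizontal direction. Differentiating the polynomial part gives $X_i[X^I Q](x_0)=a_{I+e_i}(x_0,r)$, which by Corollary \ref{coroll_unif} tends to $v_{I+e_i}(x_0)$ as $r\to 0$. The two error terms contribute, after differentiation along a horizontal curve of speed one, quantities bounded by $C r^{(\lambda-Q-p|I|_{\B{G}})/p}$ (independent of the curve; this goes to $0$ with $r$) and by $C\,t^{(\lambda-Q-p(|I|_{\B{G}}+d_N))/p-1}$ at distance $t$ from $x_0$ — but since $|I|_{\B{G}}+d_i\le k$ and $\lambda>Q+pk$, the exponent $(\lambda-Q-p(|I|_{\B{G}}+d_i))/p>0$, so this second error is $o(1)$ as the increment shrinks and the difference quotient converges. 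Passing $r\to 0$ then yields $X_i v_I(x_0)=v_{I+e_i}(x_0)$, and continuity of $v_{I+e_i}$ (Proposition \ref{propholder} when $|I+e_i|_{\B{G}}=k$, or the same argument iterated when $|I+e_i|_{\B{G}}<k$) makes this a genuine statement about the $C^k_{\B{G}}$ regularity of $v_I$.

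The main obstacle I anticipate is making the interchange of the two limits rigorous: one must send the increment in $x$ to zero to form $X_i v_I(x_0)$ while simultaneously controlling the $r$-dependence, and the clean way to do this is to first fix $r$, extract the polynomial identity above with uniform-in-$x$ error, differentiate (legitimate since the polynomial part is smooth and the errors have positive Hölder exponents strictly above $1$ only after the $d_N$ gap is used — this is exactly where the hypothesis $k\ge d_N$ and $|I|_{\B{G}}\le k-d_N$ enter), and only then let $r\to 0$. Keeping track that $|I|_{\B{G}}+d_i$ can be as large as $|I|_{\B{G}}+d_N$, so that all invoked instances of Lemmas \ref{lemmavI} and \ref{degiorgicarnot} stay within the admissible range $|J|_{\B{G}}\le k$, is the bookkeeping heart of the argument.
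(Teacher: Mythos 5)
Your route is genuinely different from the paper's own proof, which is very short: the paper writes $v_I(x_0)=\lim_{j\to\infty}a_I(x_0,r/2^j)$, exchanges $X_i$ with the limit by invoking the uniformity of the convergence (Corollary \ref{coroll_unif}), and identifies $X_i\bigl(a_I(x_0,r/2^j)\bigr)$ with $a_{I+e_i}(x_0,r/2^j)$ via $X_iX^IP_k=X^{I+e_i}P_k$, so the limit is $v_{I+e_i}(x_0)$. You instead attempt a converse-Taylor argument, approximating $v_I$ near $x_0$ by derivatives of the single polynomial $P_k(\cdot,x_0,r)$ and differentiating through the error. That strategy can be made rigorous, but as written it has a genuine gap in the quantitative bookkeeping. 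The coefficient bound you claim, $|a_J(x,r)-[X^JP_k(\cdot,x_0,r)](x)|\lesssim d(x,x_0)^{(\lambda-Q-p|J|_{\B{G}})/p}$, is not obtainable the way you describe when the radius $r$ is fixed and decoupled from $\rho=d(x,x_0)$: the defining property of $\vertiii{u}_{k,p,\lambda}$ bounds $\int_{\Omega(x,r)}|P_k(\cdot,x,r)-u|^p\,dx$ by $r^{\lambda}\vertiii{u}_{k,p,\lambda}^p$, not by $\rho^{\lambda}$, so applying Lemma \ref{degiorgicarnot} on $E=\Omega(x_0,\rho)$ yields a bound of order $r^{\lambda}/\rho^{Q+p|J|_{\B{G}}}$, which blows up as $\rho\to0$ at fixed $r$. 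In Lemma \ref{lemma_changebasepoint} the estimate in powers of $\rho$ is available precisely because the radius $2\rho$ is coupled to the distance between the two centers; your scheme of ``fix $r$, expand in $x$, differentiate, then let $r\to0$'' loses exactly this coupling, and the two-limit interchange you flag as the main obstacle is not actually resolved by the sketch.

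Moreover, even granting your displayed expansion, its $x$-dependent error exponent $(\lambda-Q-p(|I|_{\B{G}}+d_N))/p$ is only guaranteed to be positive (it equals $\alpha=(\lambda-Q-pk)/p$ when $|I|_{\B{G}}=k-d_N$), and positivity is not enough to differentiate through the error: after dividing by the increment you get, in your own notation, $t^{(\lambda-Q-p(|I|_{\B{G}}+d_N))/p-1}$, which does not tend to zero when $\alpha<1$; your concluding ``$o(1)$'' step conflates smallness of the error with smallness relative to the increment. The repair is to compare $v_I(x)$ with the full Taylor-type polynomial $\sum_{J}\frac{a_{I+J}(x_0,2\rho)}{J!}\,(x_0^{-1}x)^J$ at the coupled radius $2\rho$, $\rho=d(x,x_0)$: Lemma \ref{lemma_changebasepoint}-type estimates together with Lemma \ref{lemmavI} then give a total error of order $\rho^{(\lambda-Q-p|I|_{\B{G}})/p}$, whose exponent is at least $\alpha+d_N>d_N\geq 1$ because $|I|_{\B{G}}\leq k-d_N$; this is where the $d_N$ gap must be spent, rather than where you spend it. As written, your argument does not establish the differentiability of $v_I$, so it does not yet prove \eqref{eq_derivative}, whereas the paper's proof avoids the whole issue by differentiating the uniformly convergent family $a_I(\cdot,r/2^j)$ directly.
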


\begin{proof}
    \begin{align*}
        X_i(v_I(x_0)) &= X_i\left( \lim_{j\to\infty} a_I\left( x_0,\frac{r}{2^j} \right) \right) \stackrel{\dagger}{=} \lim_{j\to\infty} X_i\left( a_I \left( x_0,\frac{r}{2^j} \right) \right)\\ 
        & = \lim_{j\to\infty} \left[X_i\left( X^I \left(P_k\left(x,x_0,\frac{r}{2^j}\right)\right)\right)\right]_{x=x_0}\\
        & = \lim_{j\to\infty} \left[X^{I+e_i} \left(P_k\left(x,x_0,\frac{r}
        {2^j}\right)\right)\right]_{x=x_0}\\
        & = \lim_{j\to\infty} a_{I+e_i}\left( x_0,\frac{r}{2^j} \right) = v_{I+e_i}(x_0).
    \end{align*}
    Notice that the equality marked with $\dagger$ is possible because, thanks to Corollary \ref{coroll_unif}, the limit is uniform. The others follow directly from the definitions given so far.

\end{proof}

\begin{remark}
The limitations $k\geq d_N$ and $|I|_{\B{G}}\leq k-d_N$ are peculiar to the Carnot group setting. They are necessary because, in this setting, derivatives with respect to different coordinates can have different ``weights''.
    
Take for example the prototype case of the first Heisenberg group $\B{H}$, with coordinates $x,y,t$ with homogeneity respectively 1, 1 and 2. Consider a monomial of the form $x^a y^b t^c$, which has homogeneous degree $a+b+2c$, and take the derivative with respect to $x$ or $y$. The result is a monomial of homogeneous degree $a+b+2c-1$. However, if we take the derivative with respect to $t$, the result is a monomial of homogeneous degree $a+b+2c -2$. Hence, when giving the assumptions for Theorem \ref{maintheorem}, we need to make sure that $k$ is big enough and $|I|_{\B{G}}$ is small enough for all the derivatives to make sense.

The limitation imposed are still consistent with Campanato's work. Indeed, in the Euclidean case we have that all the coordinates have the same homogeneity and in particular $d_N=1$.
\end{remark}

\begin{remark}
    If we limit the range of $i$ in \eqref{eq_derivative} then we can relax the limitation imposed on $k$ (and, in turn, on $\lambda$ and $|I|_{\B{G}}$). For example, if we allow $i$ to range only between 1 and $m$ (recall that $m=m_1=\dim(V_1)$ is the dimension of the horizontal strata), then we shall go back to the original statement by Campanato \cite{Campanato64} with $k\geq 1$ and $|I|_{\B{G}}<k-1$.
\end{remark}

As a simple corollary of Proposition \ref{propholder} and Theorem \ref{maintheorem} we get the following

\begin{theorem}
\label{theochainderivative}
    Suppose that $\Omega$ has the \eqref{thin} condition, $k\geq d_N$, $\lambda>Q+pk$ and $u\in\C{L}_k^{p,\lambda}(\Omega)$. Then for each $x_0\in\Omega$, $v_{(0)}\in C^{k,\alpha}(\overline{\Omega})$ where $\alpha=\frac{\lambda-Q-pk}{p}$ and 
    \[
    X_I(v_{(0)}(x_0)) = v_I(x_0)\ \ \text{for all}\ \ I\ \text{with}\ |I|_{\B{G}}\leq k-d_N.
    \]
\end{theorem}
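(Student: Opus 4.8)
The plan is to assemble this theorem purely by combining the two results that precede it, so the proof is essentially bookkeeping. First I would observe that Theorem \ref{maintheorem} gives, for every multi-index $I$ with $|I|_{\B{G}}\leq k-d_N$, the relation $X_i(v_I(x_0))=v_{I+e_i}(x_0)$ for $i=1,\dots,N$ and every $x_0\in\Omega$; iterating this starting from the empty multi-index $(0)$ yields $X^I(v_{(0)}(x_0))=v_I(x_0)$ for all $I$ with $|I|_{\B{G}}\leq k-d_N$ — one must check that at each intermediate step the partial multi-index still satisfies the bound, which it does since its homogeneous norm only increases up to the final value $|I|_{\B{G}}\leq k-d_N$. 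Actually, care is needed here: to obtain a derivative $X^I v_{(0)}$ with $|I|_{\B{G}}=k$ one applies Theorem \ref{maintheorem} only up to order $k-d_N$ and then Proposition \ref{propholder} handles the remaining top-order derivatives; so the cleanest route is to say that the identity $X^I v_{(0)}=v_I$ holds for $|I|_{\B{G}}\leq k-d_N$ directly by induction via Theorem \ref{maintheorem}, and separately that the functions $v_I$ with $|I|_{\B{G}}=k$ are the genuine top-order $X$-derivatives obtained by one further differentiation.

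Next I would address membership in $C^{k,\alpha}(\overline{\Omega})$. By definition this requires that $X^J v_{(0)}$ exists and is continuous for all $|J|_{\B{G}}\leq k$, and that the top-order derivatives (those with $|J|_{\B{G}}=k$) are H\"older continuous with exponent $\alpha=\frac{\lambda-Q-pk}{p}$. The identity $X^J v_{(0)}=v_J$ reduces everything to properties of the family $\{v_I\}$: existence of the derivatives up to order $k-d_N$ is Theorem \ref{maintheorem}; existence up to order $k$ follows by differentiating once more (each $v_I$ with $|I|_{\B{G}}\leq k-d_N$ is $C^1$ in the horizontal directions with derivatives $v_{I+e_i}$, and by Lemma \ref{lemmavI} the functions $v_I$ for $|I|_{\B{G}}\leq\alpha$ are continuous, being uniform limits of the $a_I$, so in particular all $v_I$ with $|I|_{\B{G}}\leq k$ are continuous); H\"older continuity of the top-order terms is exactly Proposition \ref{propholder}, which applies to every $I$ with $|I|_{\B{G}}=k$ under the hypotheses $\lambda>Q+pk$ and the \eqref{thin} condition, giving $|v_I(x)-v_I(y)|\leq\Theta\vertiii{u}_{k,p,\lambda}d(x,y)^{\alpha}$.

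I would therefore structure the write-up as: (1) invoke Theorem \ref{maintheorem} and induct to get $X^I v_{(0)}=v_I$ for $|I|_{\B{G}}\leq k-d_N$; (2) note that Lemma \ref{lemmavI}/Corollary \ref{coroll_unif} (with the relevant $\alpha$) guarantee continuity of each $v_I$ up to order $k$, so all the $C^k$-norm seminorms $\sup_{\overline{\Omega}}|X^I v_{(0)}|$ are finite; (3) apply Proposition \ref{propholder} to each $I$ with $|I|_{\B{G}}=k$ to get the H\"older seminorm bound; (4) conclude $v_{(0)}\in C^{k,\alpha}(\overline{\Omega})$ by Definition, and restate the derivative identity.

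The only genuine subtlety — and the place I would be most careful — is the interplay of the index bounds: Theorem \ref{maintheorem} only differentiates up to homogeneous order $k-d_N$, while $C^{k,\alpha}$ demands control of derivatives up to order $k$. The resolution is that the "missing" layers of differentiation (from order $k-d_N$ up to $k$) are supplied by the horizontal differentiation step in the proof of Theorem \ref{maintheorem} together with the fact that Proposition \ref{propholder} already produces the top-order functions $v_I$, $|I|_{\B{G}}=k$, as H\"older-continuous functions; one should state explicitly that these $v_I$ are indeed $X^I v_{(0)}$, which follows by applying Theorem \ref{maintheorem}'s identity $X_i v_J=v_{J+e_i}$ along a chain of multi-indices ending at $I$, each intermediate $J$ having $|J|_{\B{G}}\leq k-d_N$ by construction (possible since consecutive homogeneous norms differ by at most $d_N$). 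Everything else is a direct quotation of the preceding results.
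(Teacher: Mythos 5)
Your overall route coincides with the paper's: Theorem \ref{theochainderivative} is stated there with no written proof, as a direct combination of Theorem \ref{maintheorem} and Proposition \ref{propholder}, and your induction is sound for the displayed identity. Indeed, for $|I|_{\B{G}}\leq k-d_N$ every intermediate multi-index $J$ arising when you build $X^I$ one factor at a time is dominated componentwise by $I$, hence $|J|_{\B{G}}<|I|_{\B{G}}\leq k-d_N$, so Theorem \ref{maintheorem} applies at each step and $X^I v_{(0)}=v_I$ follows.

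The gap sits exactly at the point you flag, and your proposed resolution does not work. To conclude $v_{(0)}\in C^{k,\alpha}(\overline{\Omega})$ you need $X^I v_{(0)}$ to exist and be continuous for \emph{all} $|I|_{\B{G}}\leq k$, with the top-order ones identified with the H\"older functions $v_I$ of Proposition \ref{propholder}. You claim every such $I$ can be reached through a chain whose intermediate indices all have homogeneous order $\leq k-d_N$ ``since consecutive homogeneous norms differ by at most $d_N$''; but the jump at the last step equals $d_i$ for the direction applied last, and what you actually need is that this jump be at least $|I|_{\B{G}}-(k-d_N)$. Take any group of step $\geq 2$ and $I$ purely horizontal with $|I|_{\B{G}}=k$: the penultimate index has order $k-1>k-d_N$, Theorem \ref{maintheorem} does not apply to it, and neither the existence of $X^I v_{(0)}$ nor the identity $X^I v_{(0)}=v_I$ is obtained; Proposition \ref{propholder} gives H\"older continuity of $v_I$ but not its identification with a derivative of $v_{(0)}$. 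The same problem affects all indices with $k-d_N<|I|_{\B{G}}\leq k$ that involve directions of homogeneity $<d_N$, so the $C^{k,\alpha}$ membership is not a consequence of the two quoted results alone (in the Euclidean case $d_N=1$ the layers close up, which is why Campanato's original argument has no such issue). A secondary point: your continuity claim for the lower-order $v_I$ rests on their being uniform limits of $a_I(\cdot,r)$, but continuity of $x_0\mapsto a_I(x_0,r)$ at fixed $r$ is never established in the paper, so that step also needs an argument (at order $k$ Proposition \ref{propholder} supplies it). To complete the proof one would have to extend the differentiation identity of Theorem \ref{maintheorem}, or give a direct estimate, for the missing range of indices.
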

   
\begin{corollary}
     If $u\in\C{L}_k^{p,\lambda}(\Omega)$ with $\lambda>Q+p(k+1)$ then the functions $v_I(x)$ with $|I|_{\B{G}}=k$ are constants and the function $v_{(0)}(x)$ is a polynomial of homogeneous degree at most $k$.
\end{corollary}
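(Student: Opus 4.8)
The plan is to derive everything from a single observation. Since $\lambda>Q+p(k+1)$ we in particular have $\lambda>Q+pk$, so—retaining the standing hypotheses that $\Omega$ is open, connected, bounded, satisfies \eqref{thin}, and that $k\ge d_N$—both Proposition \ref{propholder} and Theorem \ref{theochainderivative} apply, with H\"older exponent $\alpha=\tfrac{\lambda-Q-pk}{p}$; the new point is that $\alpha>1$. The corollary then reduces to the following elementary fact, which I would isolate first: \emph{if $f\colon\overline\Omega\to\B{R}$ satisfies $|f(x)-f(y)|\le L\,d(x,y)^{\alpha}$ for all $x,y\in\overline\Omega$ with some $\alpha>1$, then $f$ is constant.}

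To prove this fact I would argue as in the ``far points'' step of Proposition \ref{propholder}, showing that $f$ is locally constant. Given $x\in\Omega$, fix $\rho>0$ with $B(x,\rho)\subset\Omega$; for $y\in B(x,\rho)$ take a horizontal curve $\gamma\colon[0,\ell]\to\B{G}$ joining $x$ to $y$, parametrised by (horizontal) arc length, with $\ell=L(\gamma)<\rho$ (possible by definition of $d$). Every point of $\gamma$ lies in $B(x,\rho)\subset\overline\Omega$, since $d(x,\gamma(t))\le L(\gamma|_{[0,t]})<\rho$. Partitioning $[0,\ell]$ into $n$ equal pieces $t_j=j\ell/n$ and using $d(\gamma(t_{j-1}),\gamma(t_j))\le \ell/n$ with the assumed bound gives $|f(x)-f(y)|\le L\sum_{j=1}^{n}(\ell/n)^{\alpha}=L\,\ell^{\alpha}\,n^{1-\alpha}\to 0$ as $n\to\infty$, so $f(x)=f(y)$. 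As $\Omega$ is connected, $f$ is constant on $\Omega$, hence on $\overline\Omega$ by continuity.

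The first assertion is then immediate: by Proposition \ref{propholder} each $v_I$ with $|I|_{\B{G}}=k$ is $\alpha$-H\"older on $\overline\Omega$ with $\alpha>1$, hence constant. For the second assertion, Theorem \ref{theochainderivative} gives $v_{(0)}\in C^{k,\alpha}(\overline\Omega)$, which—reading \eqref{normCkalpha} with the admissible value $\alpha>1$—means exactly that every derivative $X^{J}v_{(0)}$ with $|J|_{\B{G}}=k$ is $\alpha$-H\"older; by the fact above each such $X^{J}v_{(0)}$ is constant. It remains to argue that a $C^{k}(\overline\Omega)$ function all of whose homogeneous-order-$k$ derivatives are constant must coincide on the connected open set $\Omega$ with a polynomial of homogeneous degree at most $k$. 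This I would get from the stratified Taylor inequality (cf.\ \cite{FS}): near any $x_0\in\Omega$ the remainder between $v_{(0)}$ and its stratified Taylor polynomial of homogeneous degree $k$ centred at $x_0$ is controlled by the modulus of continuity of the order-$k$ derivatives of $v_{(0)}$, which here is zero; hence $v_{(0)}$ equals locally a polynomial of homogeneous degree $\le k$, and since such local polynomials agree on overlaps, connectedness forces $v_{(0)}$ to be globally a polynomial of homogeneous degree at most $k$.

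The main obstacle is this last step—converting ``all top-order derivatives are constant'' into ``polynomial of homogeneous degree $\le k$''. In the Euclidean case it is just $k$-fold integration, but the non-commutativity of the $X_i$ and the mixed weights $d_i$ make it less transparent; the clean route is the stratified Taylor formula with remainder, though one could alternatively iterate Theorem \ref{maintheorem} together with Lemma \ref{lemmaderivative}. A minor point to handle with care is the interpretation of the conclusion of Theorem \ref{theochainderivative} when $\alpha>1$ (the space $C^{k,\alpha}$ was defined only for $0<\alpha\le 1$): what is actually used is the finiteness of the H\"older seminorm of the order-$k$ derivatives, which is exactly the hypothesis of the fact proved above and is precisely what the proof of Theorem \ref{theochainderivative} delivers irrespective of whether $\alpha\le 1$.
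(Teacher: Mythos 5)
Your proof is correct and follows essentially the same route as the paper's: the paper's proof simply observes that for $\lambda>Q+p(k+1)$ the exponent $\alpha=\frac{\lambda-Q-pk}{p}$ from Proposition \ref{propholder} exceeds $1$, which forces the $v_I$ with $|I|_{\B{G}}=k$ to be constant, and states that the polynomial assertion for $v_{(0)}$ follows directly from this. You merely make explicit the two ingredients the paper treats as immediate, namely that a H\"older bound with exponent $\alpha>1$ (exploited along horizontal curves) forces constancy on the connected set $\Omega$, and the passage, via the stratified Taylor inequality, from constant top-order derivatives to a polynomial of homogeneous degree at most $k$.
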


\begin{proof}
    The second assertion comes directly from the first. The first assertion is a direct consequence of Proposition \ref{propholder} when $\lambda>Q+p(k+1)$, since in that case $\alpha$ becomes bigger than 1.
\end{proof}

To close this section we prove the equivalent of \cite[Theorem 5.1]{Campanato64}, which says that the regularity results proved so far for $v_{(0)}$ also hold for $u\in\C{L}^{p,\lambda}_k(\Omega)$.

\begin{theorem}
\label{Thm47}
    Suppose that $\Omega$ has the (\ref{thin}) condition and $u\in\C{L}^{p,\lambda}_k(\Omega)$ with $k\geq d_N$, $\lambda>Q+pk$. Then $u\in C^{k,\alpha}(\overline{\Omega})$ where $\alpha = \frac{\lambda - Q - pk}{p}$ and the following estimate holds:
    \begin{equation}\label{eq.holderestimate}
    [u]_{k,\alpha,\Omega}\leq \Theta \vertiii{u}_{k,p,\lambda}
    \end{equation}
    where $\Theta$ is the constant from Proposition \ref{propholder}.
\end{theorem}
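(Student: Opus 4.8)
The plan is to prove that $u$ coincides almost everywhere on $\Omega$ with the function $v_{(0)}$ constructed in Section \ref{Sec3}, and then to transfer to $u$ the regularity already established for $v_{(0)}$. Since $\lambda>Q+pk$, one may apply Lemma \ref{lemmavI}, Corollary \ref{coroll_unif}, Proposition \ref{propholder} and Theorem \ref{theochainderivative} with $\alpha=k$; in particular $v_{(0)}\in C^{k,\alpha}(\overline{\Omega})$ with $\alpha=\frac{\lambda-Q-pk}{p}$, and its horizontal derivatives $v_I$ of weight $|I|_{\B{G}}=k$ satisfy $|v_I(x)-v_I(y)|\le\Theta\vertiii{u}_{k,p,\lambda}\,d(x,y)^{\alpha}$. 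Hence, once $u=v_{(0)}$ a.e.\ on $\Omega$ is known, the continuous representative of $u$ extends to $\overline{\Omega}$ as $v_{(0)}$, and we obtain $u\in C^{k,\alpha}(\overline{\Omega})$ at once.

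For the almost-everywhere identity, I would fix a $p$-Lebesgue point $x_0\in\Omega$ of $u$ (a.e.\ point of $\Omega$ is one) and, for $r$ small enough that $\Omega(x_0,r)=B(x_0,r)$, estimate by Jensen's inequality
\[
\C{L}^N(\Omega(x_0,r))\,|u(x_0)-v_{(0)}(x_0)|^p\le 2^{p-1}\!\int_{\Omega(x_0,r)}\!|u-u(x_0)|^p\,dx+2^{p-1}\!\int_{\Omega(x_0,r)}\!|u-v_{(0)}(x_0)|^p\,dx .
\]
The first integral is $o(r^Q)$ by the Lebesgue-point property. In the second I would insert the best approximating polynomial $P_k(\cdot,x_0,r)$: by the definition of the seminorm and Proposition \ref{inf_realization} one has $\int_{\Omega(x_0,r)}|u-P_k(\cdot,x_0,r)|^p\le r^{\lambda}\vertiii{u}_{k,p,\lambda}^p=o(r^Q)$ because $\lambda>Q$, so everything is reduced to showing that $\int_{\Omega(x_0,r)}|P_k(x,x_0,r)-v_{(0)}(x_0)|^p\,dx=o(r^Q)$. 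Granting this, dividing through by $\C{L}^N(\Omega(x_0,r))\asymp r^Q$ and letting $r\to0$ gives $u(x_0)=v_{(0)}(x_0)$.

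The averaged convergence of $P_k(\cdot,x_0,r)$ to the constant $v_{(0)}(x_0)$ is, I expect, the crux of the argument. I would write $P_k$ in Taylor form $P_k(x,x_0,r)=\sum_{|J|_{\B{G}}\le k}\frac{a_J(x_0,r)}{J!}(x_0^{-1}x)^J$, note that its constant term equals $a_{(0)}(x_0,r)=P_k(x_0,x_0,r)$, and split $|P_k(x,x_0,r)-v_{(0)}(x_0)|\le|P_k(x,x_0,r)-a_{(0)}(x_0,r)|+|a_{(0)}(x_0,r)-v_{(0)}(x_0)|$. The second summand is $\le N\vertiii{u}_{k,p,\lambda}\,r^{(\lambda-Q)/p}$ by Lemma \ref{lemmavI}. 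For the first I would use that $d$ is left-invariant and $\delta_\lambda$-homogeneous to obtain the elementary coordinate bound $|(x_0^{-1}x)_i|\le c_i\,d(x,x_0)^{d_i}$, hence $|(x_0^{-1}x)^J|\le c_J\,d(x,x_0)^{|J|_{\B{G}}}\le c_J\,r^{|J|_{\B{G}}}$ for $x\in\Omega(x_0,r)$, together with the fact that the coefficients $a_J(x_0,r)$ remain bounded (uniformly in $x_0$) as $r\to0$ — this last point following by combining Lemma \ref{lemmavI} with De Giorgi's Lemma \ref{degiorgicarnot} evaluated at a fixed scale. This yields $|P_k(x,x_0,r)-v_{(0)}(x_0)|\lesssim r+r^{(\lambda-Q)/p}$ uniformly on $\Omega(x_0,r)$, whence $\frac{1}{r^Q}\int_{\Omega(x_0,r)}|P_k(x,x_0,r)-v_{(0)}(x_0)|^p\,dx\lesssim r^p+r^{\lambda-Q}\to0$, as required. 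This establishes $u=v_{(0)}$ a.e.\ on $\Omega$, hence $u\in C^{k,\alpha}(\overline{\Omega})$.

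Finally, for the estimate \eqref{eq.holderestimate}: with $u=v_{(0)}$, the derivatives $X^I u$ of weight $|I|_{\B{G}}=k$ are exactly the functions $v_I$, so the top-order Hölder contribution to $[u]_{k,\alpha,\Omega}$ is bounded by $\Theta\vertiii{u}_{k,p,\lambda}$ directly from Proposition \ref{propholder} (this is where the constant $\Theta$ comes from); the lower-order sup-norms $\sup_{\overline{\Omega}}|X^I u|=\sup_{\overline{\Omega}}|v_I|$, $|I|_{\B{G}}\le k$, are controlled in the same spirit by Lemma \ref{lemmavI} together with De Giorgi's Lemma \ref{degiorgicarnot} at a fixed scale, and collecting the pieces gives \eqref{eq.holderestimate}. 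The main obstacle throughout is the Carnot-specific step of the middle paragraph, which rests both on the uniform-in-$r$ control of the Taylor coefficients of $P_k(\cdot,x_0,r)$ supplied by Lemma \ref{lemmavI} and on the anisotropic coordinate estimate $|(x_0^{-1}x)^J|\lesssim d(x,x_0)^{|J|_{\B{G}}}$ that replaces the isotropic Euclidean bound used by Campanato.
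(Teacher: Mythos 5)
Your proposal is correct and follows essentially the same route as the paper: both arguments show $u=v_{(0)}$ a.e.\ at a $p$-Lebesgue point by splitting through the optimal polynomial $P_k(\cdot,x_0,r)$, controlling $\int_{\Omega(x_0,r)}|u-P_k|^p$ by $r^{\lambda}\vertiii{u}_{k,p,\lambda}^p$, controlling the deviation of $P_k$ from its constant term via the boundedness of the coefficients $a_I(x_0,r)$ (Lemma \ref{lemmavI}) together with the homogeneous bound on monomials, and then invoking Proposition \ref{propholder} for the H\"older estimate. The only difference is cosmetic: the paper estimates $|a_{(0)}(x_0,\rho)-u(x_0)|$ and lets $a_{(0)}(x_0,\rho)\to v_{(0)}(x_0)$, while you estimate $|u(x_0)-v_{(0)}(x_0)|$ directly, which is the same argument rearranged.
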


\begin{proof}
    We follow the proof of \cite[Theorem 5.1]{Campanato64} and in particular we show that, under the assumptions of the theorem, $u$ coincides almost everywhere on $\Omega$ with $v_{(0)}$.

    Since $u\in L^p(\Omega)$, the following is true for almost every $x_0\in\Omega$
    \begin{equation}\label{eq_densitypoint}
        \lim_{\rho\to 0}\; \dashint_{\Omega(x_0,\rho)} |u(x)-u(x_0)|^pdx = 0.
    \end{equation}
    Take one of such points $x_0$. For almost any other $x\in\Omega$ we have that
    \begin{align*}
        |a_{(0)}(x_0,\rho) - u(x_0)|\leq& \, 3^{p-1}\left\{ |P_k(x,x_0,\rho) - a_{(0)}(x_0,\rho)|^p \right. \\
        & \left. |P_k(x,x_0,\rho) - u(x)|^p + |u(x)-u(x_0)|^p \right\}.
    \end{align*}
We now integrate on both side on $\Omega(x_0,\rho)$ to get
\begin{align}
        a_{(0)}(x_0,\rho) - u(x_0)|\leq\;&  \frac{3^{p-1}}{A\rho^Q}\int_{\Omega(x_0,\rho)}|P_k(x,x_0,\rho) - a_{(0)}(x_0,\rho)|^p dx \label{eq.part1}\\
        & + \frac{3^{p-1}}{A\rho^Q}\int_{\Omega(x_0,\rho)}|P_k(x,x_0,\rho) - u(x)|^p dx \label{eq.part2}\\
        & + \frac{3^{p-1}}{A\rho^Q}\int_{\Omega(x_0,\rho)}|u(x)-u(x_0)|^p dx. \label{eq.part3}
    \end{align}
    Observe that the integral in \eqref{eq.part2} is infinitesimal with $\rho$ due to the definition of $\C{L}^{p,\lambda}_k$ and so is the integral in \eqref{eq.part3}, thanks to \eqref{eq_densitypoint}. We are left to show that the integral in \eqref{eq.part1} is also infinitesimal with $\rho$. A simple computation yields
    \begin{align*}
    \int_{\Omega(x_0,\rho)}|P_k(x,x_0,\rho) - a_{(0)}(x_0,\rho)|^p dx &\leq \int_{\Omega(x_0,\rho)}\left|\sum_{1\leq |I|_{\B{G}}\leq k} a_{(I)}(x_0,\rho)x^I\right|^p dx \\
    &\leq \Lambda \sum_{1\leq |I|_{\B{G}}\leq k} |a_{(I)}(x_0,\rho)|^p\rho^{p|I|_{\B{G}}+Q},
    \end{align*}
    where $\Lambda$ is a constant that depends only on $k,p$ and $Q$. Hence also the right-hand side of \eqref{eq.part1} goes to zero with $\rho$, proving the assertion.
    
    The estimate \eqref{eq.holderestimate} now follows directly from \eqref{normCkalpha} and from Proposition \ref{propholder}. 
    \end{proof}
\begin{corollary}
    If, in the hypothesis of Theorem \ref{Thm47}, we further require $\lambda > Q + p(k+1)$, then $u$ coincide in $\Omega$ with a polynomial of homogeneous degree at most $k$.
\end{corollary}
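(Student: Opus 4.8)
The plan is to read the result off from Theorem~\ref{Thm47} combined with the corollary stated just after Theorem~\ref{theochainderivative}, observing that the hypothesis $\lambda>Q+p(k+1)$ imposed here is strictly stronger than the hypothesis $\lambda>Q+pk$ required by both of those statements (and the remaining assumptions, namely the \eqref{thin} condition, $k\geq d_N$ and $u\in\C{L}^{p,\lambda}_k(\Omega)$, are unchanged). So no new estimate is needed; it is purely a matter of assembling what has already been proved.

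Concretely, I would proceed in three steps. First, apply Theorem~\ref{Thm47}: since $\lambda>Q+p(k+1)>Q+pk$ and $k\geq d_N$, its hypotheses hold, so $u\in C^{k,\alpha}(\overline{\Omega})$ and, as is actually established inside the proof of Theorem~\ref{Thm47}, $u$ coincides with $v_{(0)}$ almost everywhere on $\Omega$. Second, apply the corollary following Theorem~\ref{theochainderivative}, whose hypothesis is precisely $\lambda>Q+p(k+1)$: it gives that $v_{(0)}$ is a polynomial of homogeneous degree at most $k$ on $\B{G}$; in particular $v_{(0)}$ is continuous on $\Omega$. Third, since $\Omega$ is open, the set $\{x\in\Omega:\ u(x)=v_{(0)}(x)\}$ has full $\C{L}^Q$-measure and is therefore dense in $\Omega$; as $u$ and $v_{(0)}$ are both continuous on $\Omega$, they must agree on all of $\Omega$, which is exactly the claim.

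There is no real obstacle: the substantive work is already packaged in Theorem~\ref{Thm47} and in the earlier corollary, and the only genuinely new ingredient is the upgrade from ``almost everywhere'' to ``everywhere'', which is the routine density-plus-continuity argument. If one wished to avoid citing the earlier corollary, one could instead note that under $\lambda>Q+p(k+1)$ the Hölder exponent $\alpha=\frac{\lambda-Q-pk}{p}$ exceeds $1$, so Proposition~\ref{propholder} forces every $v_I$ with $|I|_{\B{G}}=k$ to be constant (a Hölder bound with exponent $>1$ on the rectifiably connected group $\B{G}$ yields constancy), and then the structural identities $X_i v_I=v_{I+e_i}$ from this section identify $v_{(0)}$ with a polynomial of homogeneous degree at most $k$; the conclusion for $u$ then follows as above. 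Either way the proof is short.
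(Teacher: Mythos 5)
Your proposal is correct and follows essentially the route the paper intends: the paper states this corollary without a separate proof, as an immediate consequence of Theorem \ref{Thm47} (whose proof shows $u$ coincides a.e.\ with $v_{(0)}\in C^{k,\alpha}(\overline{\Omega})$) combined with the corollary following Theorem \ref{theochainderivative}, which under $\lambda>Q+p(k+1)$ forces $\alpha>1$, the constancy of the $v_I$ with $|I|_{\B{G}}=k$, and hence that $v_{(0)}$ is a polynomial of homogeneous degree at most $k$. Your final density-plus-continuity step upgrading ``almost everywhere'' to ``everywhere'' is a harmless refinement of that same assembly.
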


\section{Conclusion}
\label{Sec5}
In this manuscript we have tabled a joint approach between the functional spaces firstly introduced by Morrey and then deeply studied by Campanato  and the theory of Carnot groups, which is becoming increasingly important for researchers in the field of analysis and mathematical physics. Whilst we have recalled the most significant definitions and classical results for both Morrey-Campanato spaces and Carnot groups, providing a useful introductory guide for those who begin to look out on these topics, we have also proved some properties concerning the class $\C{L}_k^{p,\lambda}(\Omega)$ and its regularity when $\Omega\subset\B{G}$. 

To conclude, we outline some possible further developments and research lines. On the one hand, the potential of Campanato's point of view could be applied in a more general setting, considering different Lie groups or even adapted to differentiable manifolds, under suitable hypotheses.  On the other hand, the increasing importance of Carnot groups with their applications, for instance, in control theory and quantum mechanics, should suggest a more specific study around the adjustment of other classical results of functional analysis in this framework. Finally, we want to emphasize that the original manuscript of Campanato, widely cited in this article, has other promising results\footnote{We refer here to the Section $6$ and Section $7$ in \cite{Campanato64}.} that can be reformulated in terms of Carnot groups, which we do not address in these pages lest we weigh down this initial analysis on this topic.
\section*{Statements and Declarations}

\noindent\textbf{Acknowledgements.} The authors are members and acknowledge the support of {\it Gruppo Nazionale per l’Analisi Matematica, la Probabilità e le loro Applicazioni} (GNAMPA) of {\it Istituto Nazionale di Alta Matematica} (INdAM).

Part of this work was done while M.~Capolli was a post-doctoral student at the institute of mathematics of the Polish Academy of Sciences (IMPAN) under the supervision of Dr. Hab. Prof. Tomasz Adamowich.

N.~Cangiotti acknowledges the support of the MIUR - PRIN 2017 project ``From Models to Decisions'' (Prot. N. 201743F9YE).

M. Capolli acknowledges the support granted by the European Union – NextGenerationEU Project “NewSRG - New directions in SubRiemannian Geometry” within the Program STARS@UNIPD 2021.



\bibliographystyle{plain}
\bibliography{main}

\begin{thebibliography}{10}

\bibitem{Adams12}
D.~R. Adams and J.~Xiao.
\newblock {Morrey spaces in harmonic analysis}.
\newblock {\em Arkiv för Matematik}, 50(2):201 -- 230, 2012.

\bibitem{BLU07}
A.~Bonfiglioli, E.~Lanconelli, and F.~Uguzzoni.
\newblock {\em Stratified Lie groups and Potential Theory for their
  Sub-Laplacians}.
\newblock Springer Monographs in Mathematics, Springer, Berlin, 2007.

\bibitem{Campanato61}
S.~Campanato.
\newblock Caratterizzazione delle tracce di funzioni appartenenti ad una classe
  di morrey insieme con le loro derivate prime.
\newblock {\em Annali della Scuola Normale Superiore di Pisa - Scienze Fisiche
  e Matematiche}, Ser. 3, 15(3):263--281, 1961.

\bibitem{Campanato63}
S.~Campanato.
\newblock Propriet\`a di h\"olderianit\`a di alcune classi di funzioni.
\newblock {\em Annali della Scuola Normale Superiore di Pisa - Scienze Fisiche
  e Matematiche}, Ser. 3, 17(1--2):175--188, 1963.

\bibitem{Campanato64}
S.~Campanato.
\newblock Propriet\`a di una famiglia di spazi funzionali.
\newblock {\em Annali della Scuola Normale Superiore di Pisa - Scienze Fisiche
  e Matematiche}, Ser. 3, 18(1):137--160, 1964.

\bibitem{Capolli23}
M.~Capolli, A.~Pinamonti, and G.~Speight.
\newblock A ${C}^k$ {L}usin approximation theorem for real-valued functions on
  {C}arnot groups.
\newblock {\em Indiana University Mathematics Journal}, 72:1327--1365, 2023.

\bibitem{Eroglu17}
A.~Eroglu and V.~S. Guliyevand J.~V. Azizov.
\newblock Characterizations for the fractional integral operators in
  generalized morrey spaces on carnot groups.
\newblock {\em Mathematical Notes}, 102:722--734, 2017.

\bibitem{Finn53}
R.~Finn.
\newblock Isolated singularities of solutions of non-linear partial
  differential equations.
\newblock {\em Transactions of the American Mathematical Society}, 75:385--404,
  1953.

\bibitem{FS}
G.~B. Folland and E.~Stein.
\newblock {\em Hardy Spaces on Homogeneous Groups}.
\newblock Princeton University Press, Princeton, New Jersey, 1982.

\bibitem{Giusti67}
E.~Giusti.
\newblock Sulla regolarit{\`a} delle soluzioni di una classe di equazioni
  ellittiche.
\newblock {\em Rendiconti del Seminario Matematico della Universit{\`a} di
  Padova}, 39:362--375, 1967.

\bibitem{Giusti78}
E.~Giusti.
\newblock {\em Equazioni ellittiche del secondo ordine}.
\newblock Pitagora Editrice, Bologna, 1978.

\bibitem{Guliyev13}
V.~S. Guliyev, A.~Akbulut, and Y.~Mammadov.
\newblock Boundedness of fractional maximal operator and their higher order
  commutators in generalized morrey spaces on carnot groups.
\newblock {\em Acta Mathematica Scientia}, 33(5):1329--1346, 2013.

\bibitem{Kato92}
T.~Kato.
\newblock Strong solutions of the navier-stokes equation in morrey spaces.
\newblock {\em Boletim da Sociedade Brasileira de Matemática}, 2:127--155,
  1992.

\bibitem{Lemarie07}
P.~G. Lemarié-Rieusset.
\newblock The navier-stokes equations in the critical morrey-campanato space.
\newblock {\em Revista Matemática Iberoamericana}, 23(3):897--930, 2007.

\bibitem{Lemarie13}
P.~G. Lemarié-Rieusset.
\newblock Multipliers and morrey spaces.
\newblock {\em Potential Analysis}, 38:741--752, 2013.

\bibitem{Meyer64}
N.~G. Meyers.
\newblock Mean oscillation over cubes and h\"older continuity.
\newblock {\em Proceedings of the American Mathematical Society}, 15, 1964.

\bibitem{Miranda63}
C.~Miranda.
\newblock Sulle equazioni ellittiche del secondo ordine di tipo non
  variazionale, a coefficienti discontinui.
\newblock {\em Annali di Matematica Pura ed Applicata}, 63:353--386, 1963.

\bibitem{Moen14}
K.~Moen and V.~Naibo.
\newblock Higher-order multilinear poincaré and sobolev inequalities in carnot
  groups.
\newblock {\em Potential Analysis}, 40:231--245, 2014.

\bibitem{Morrey38}
C.~B. Morrey.
\newblock On the solutions of quasi-linear elliptic partial differential
  equations.
\newblock {\em Transactions of the American Mathematical Society}, 43:126--166,
  1938.

\bibitem{Nirenberg53}
L.~Nirenberg.
\newblock On nonlinear elliptic partial differential equations and h{\"o}lder
  continuity.
\newblock {\em Communications on Pure and Applied Mathematics}, 6:103--156,
  1953.

\bibitem{Peetre69}
J.~Peetre.
\newblock On the theory of lp,$\lambda$ spaces.
\newblock {\em Journal of Functional Analysis}, 4:71--87, 1969.

\bibitem{PV06}
I.~M. Pupyshev and S.~K. Vodop'yanov.
\newblock Whitney-type theorems on extension of functions on carnot groups.
\newblock {\em Siberian Mathematical Journal}, 47(4):601--620, 2006.

\bibitem{Rafeiro2013}
H.~Rafeiro, N.~Samko, and S.~Samko.
\newblock {\em Morrey-Campanato Spaces: an Overview}, pages 293--323.
\newblock Springer Basel, Basel, 2013.

\bibitem{sawano2020}
Y.~Sawano, G.~Di~Fazio, and D.~I. Hakim.
\newblock {\em Morrey Spaces: Introduction and Applications to Integral
  Operators and PDE's}.
\newblock Chapman and Hall/CRC Monographs and Research Notes in Mathematics
  Series. CRC Press, 2020.

\bibitem{Stampacchia60}
G.~Stampacchia.
\newblock Problemi al contorno ellittici, con dati discontinui, dotati di
  soluzioni h{\"o}lderiane.
\newblock {\em Annali di Matematica Pura ed Applicata}, 51:1--37, 1960.

\bibitem{Taibleson80}
M.~H. Taibleson and G.~Weiss.
\newblock The molecular characterization of certain {Hardy} spaces.
\newblock In {\em Representation theorems for holomorphic and harmonic
  functions in $L^p$ / The molecular characterization of certain Hardy spaces},
  number~77 in Ast\'erisque. Soci\'et\'e math\'ematique de France, 1980.

\bibitem{Talenti65}
G.~Talenti.
\newblock Sopra una classe di equazioni ellittiche a coefficienti misurabili.
\newblock {\em Annali di Matematica Pura ed Applicata}, 69:285--304, 1965.

\bibitem{Triebel13}
H.~Triebel.
\newblock {\em Hybrid Function Spaces, Heat and Navier-Stokes Equations},
  volume~20 of {\em Tracts in Mathematics}.
\newblock European Mathematical Society, 2013.

\bibitem{yang10}
D.~Yang, D.~Yang, and Y.~Zhou.
\newblock Localized {M}orrey-{C}ampanato spaces on metric measure spaces and
  applications to {S}chr\"odinger operators.
\newblock {\em Nagoya Mathematical Journal}, 198:77–119, 2010.

\bibitem{Yuan10}
W.~Yuan, W.~Sickel, and D.~Yang.
\newblock {\em Morrey and Campanato Meet Besov, Lizorkin and Triebel}.
\newblock Lecture Notes in Mathematics, 2005. Springer Berlin Heidelberg,
  Berlin, Heidelberg, 2010.

\end{thebibliography}

\end{document}